\theoremstyle{plain}
\newtheorem{thm}{Theorem}[section]
\newtheorem{cor}[thm]{Corollary}
\newtheorem{lem}[thm]{Lemma}
\newtheorem{prop}[thm]{Proposition}
\theoremstyle{definition}
\newtheorem{defn}{Definition}
\newtheorem{exmp}{Example}
\theoremstyle{remark}
\def\R{\mathbb{R}}
\newcommand{\bu}{\mathbf{u}}
\newcommand{\be}{\begin{equation}} 
	\newcommand{\ee}{\end{equation}} 
\newcommand\asd{\mbox{\rm dim}_{\rm A}\,} 
\newcommand\pkd{\mbox{\rm dim}_{\rm P}\,} 
\newcommand\hdd{\mbox{\rm dim}_{\rm H}\,} 
\newcommand\ubd{\overline{\mbox{\rm dim}}_{\rm B}\,} 
\newcommand\lbd{\underline{\mbox{\rm dim}}_{\rm B}\,} 
\newcommand\bod{\mbox{\rm dim}_{\rm B}\,} 
\newcommand\uid{\overline{\mbox{\rm dim}}_{\rm \theta}\,} 
\newcommand\lid{\underline{\mbox{\rm dim}}_{\rm \theta}\,} 
\title{Intermediate dimensions of  Moran sets and their visualization}
\date{}
\author{Yali Du}
\address[Yali Du]{School of Mathematics and Statistics, Hubei University, Wuhan, 430062, P.~R. China}
\email{yl.du@yahoo.com}
\author[J. J. Miao]{Jun Jie Miao}
\address[J. J. Miao]{School of Mathematical Sciences,  Key Laboratory of MEA(Ministry of Education) \& Shanghai Key Laboratory of PMMP,  East China Normal University, Shanghai 200241, P.~R. China}
\email{jjmiao@math.ecnu.edu.cn}
\author[Tianrui Wang]{Tianrui Wang}
\address[Tianrui Wang]{School of Mathematical Sciences,  Key Laboratory of MEA(Ministry of Education) \& Shanghai Key Laboratory of PMMP,  East China Normal University, Shanghai 200241, P.~R. China}
\email{51265500036@stu.ecnu.edu.cn}
\author[Haojie Xu]{Haojie Xu}
\address[Haojie Xu]{School of Mathematical Sciences,  Key Laboratory of MEA(Ministry of Education) \& Shanghai Key Laboratory of PMMP,  East China Normal University, Shanghai 200241, P.~R. China}
\email{51265500075@stu.ecnu.edu.cn}
\keywords{Intermediate dimension, Hausdorff dimension, Box-counting dimension, Moran set, Cut set}
\thanks{This work is partially supported by National Natural Science Foundation of China (Grant No.12201190).}
\begin{document}
	\begin{abstract}
		Intermediate dimensions are a class of new fractal dimensions which provide a spectrum of dimensions interpolating between the Hausdorff and box-counting dimensions. 
	
		In this paper, we study the intermediate dimensions of Moran sets. Moran sets may be regarded as a generalization of self-similar sets generated by using different class of similar mappings at each level with unfixed translations, and this  causes the lack of ergodic properties on Moran set. Therefore, the intermediate dimensions do not necessarily exist, and we calculate  the upper and lower intermediate dimensions of Moran sets. In particular, we obtain a simplified intermediate dimension formula for homogeneous Moran sets. Moreover, we study the visualization of  the upper intermediate dimensions for some homogeneous Moran sets, and  we show that  their upper intermediate dimensions are given by M\"{o}bius transformations.
	\end{abstract}
	
	\maketitle
	
	\section{Introduction}
	\subsection{Intermediate dimensions}
	The notion of dimension is central to fractal geometry, and there are different dimensions used in studying the various fractal objects such as  lower dimension, Hausdorff dimension, packing dimension, box-counting dimension and Assouad dimension, see~\cite{Fal,Fra2,Wen00}. It is well know that all these dimensions are identical for self-similar sets satisfying open set condition. In various studies,  Hausdorff and box-counting dimensions are  two fundamental ones used in fractal geometry, and there are many interesting fractal sets with different Hausdorff and box-counting dimensions. For example, the Hausdorff dimensions of many non-typical self-affine carpets and Moran sets are strictly less than their box-counting dimensions, see  \cite{Baran07,Bedfo84, LalGa92,McMul84, Wen00,Wen01}. The reason is because covering sets of widely ranging scales are permitted in the definition of Hausdorff dimensions, whereas  covering sets that are all of the same size are essentially used in box-counting dimensions, see~\cite{Fal} for details.

Recently, the growing literature on dimension spectra is starting to provide a unifying framework for the many notions of dimensions that arise throughout the field of fractal geometry, see  \cite{Ban1,Fra1, FFK,FHHTY,FY} for various studies on dimension spectra.  Suppose that there are two different dimensions, written as $\dim_X$ and $\dim_Y$, with $\dim_X E \leq \dim_YE$ for all $E \subset \mathbb{R}^d$. Dimension spectra aim to provide a continuum of dimensions, say $\dim_{\theta}$ with $\theta \in [0,1]$, such that
	$$
	\dim_0 E = \dim_X E \qquad \textit{and }\qquad \dim_1 E = \dim_Y E.
	$$
This is of interest for many reasons. For example, Hausdorff and box-counting dimensions 	may behave differently for many non-typical self-affine sets and Moran sets since each of them is sensitive to different geometric properties of these sets. Therefore, it may be valuable to understand for which $\theta$ this transition in geometric behaviour occurs, and this potentially 	deepens our understanding of Hausdorff  and box-counting dimensions and the geometric structure of the fractal sets, see \cite{BJ1, BJ2,BanKol,B, BFF,Fra21, Fra4,Kol1} for various related studies and applications.
	
	Recently, Falconer, Fraser and Kempton in \cite{FFK} introduced intermediate dimensions to provide a unifying framework for Hausdorff and box-counting dimensions. 	
	\begin{defn}
		Given a subset $E\subset \mathbb{R}^d$. For each $0\le \theta \le 1$, the lower and upper $\theta$-intermediate dimensions of $E$ are defined respectively by
		\begin{eqnarray*}
			\lid E&=&\inf\{  s\ge 0:\textit{for all }\varepsilon>0, \Delta>0, \textit{there exists } 0<\delta\le\Delta \textit{ and a cover $\{U_i\}$} \\
			&&\hspace{2cm} \textit{of $E$ such that } \delta^\frac{1}{\theta}\le \lvert U_i\rvert \le \delta , \sum \lvert U_i\rvert ^s\le \varepsilon\}, \\
			\uid E&=&\inf\{  s\ge 0:\textit{for all }\varepsilon>0,  \textit{ there exists } \Delta >0, \textit{such that for all }0<\delta\le\Delta \\
			&&\hspace{1cm}    \textit{ there is a cover $\{U_i\}$} \textit{of $E$ such that } \delta^\frac{1}{\theta}\le \lvert U_i\rvert \le \delta , \sum \lvert U_i\rvert ^s\le \varepsilon\}.
		\end{eqnarray*}
		If $\lid E=\uid E $, we write $\dim_{\theta}E$ for the common value which we refer to as the $\theta$-intermediate dimension of $E$.
	\end{defn}
	
	As we may see from the definition, intermediate dimensions provide a continuum between Hausdorff and box-counting dimensions since it is achieved by restricting the families of allowable covers in the definition of Hausdorff dimension by requiring  $|U| \leq |V |^{\theta}$ for all sets $U, V$ in an admissible cover, where $\theta \in [0, 1]$ is a parameter. For $\theta = 1$, the only covers using sets of the same size are allowable, and box-counting dimension is recovered. On the other hand, for $\theta = 0$, there are no restrictions for the size of the sets used in the covers, and this gives Hausdorff dimension. Therefore, Hausdorff and box-counting dimensions may be regarded as particular cases of a spectrum of intermediate dimensions $\dim_{\theta} E$, that is, 	
	$$
	\overline{\dim}_0 E =\underline{\dim}_0E=\hdd E, \quad  \overline{\dim}_1E=\overline{\dim}_B E, \quad   \underline{\dim}_1E=\underline{\dim}_B E,
	$$
and	 we refer the readers to \cite{FFK,Fra1} for the properties of intermediate dimensions. In \cite{FFK,Fra1}, the authors proved the continuity of intermediate dimensions.
	\begin{prop}
		Given a bounded set  $E \subset \R^d$, the dimension spectra $\lid E$ and $\uid E$ are  continuous functions for  $\theta\in (0,1]$.
	\end{prop}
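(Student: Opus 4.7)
The plan is to establish continuity on $(0,1]$ by pairing the immediate monotonicity of the intermediate dimensions with a one-sided Lipschitz-type comparison. I first observe that for $0<\theta_1<\theta_2\le 1$ and $\delta\in(0,1)$ the inequality $\delta^{1/\theta_1}<\delta^{1/\theta_2}$ holds, so every $\theta_2$-admissible cover is automatically $\theta_1$-admissible. Hence both $\overline{\dim}_\theta E$ and $\underline{\dim}_\theta E$ are non-decreasing in $\theta$, and continuity will follow from the bound
\[
\overline{\dim}_{\theta_2} E \,\le\, \frac{\theta_2}{\theta_1}\,\overline{\dim}_{\theta_1} E
\]
(and its analogue for $\underline{\dim}_\theta$): letting $\theta_2\to\theta_1^+$ controls the right side, while rewriting as $\overline{\dim}_{\theta_1}E\ge(\theta_1/\theta_2)\,\overline{\dim}_{\theta_2}E$ and sending $\theta_1\to\theta_2^-$ provides the matching direction, so monotonicity squeezes the limits on either side.

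The main step is a cover-conversion argument. Fix $s>\overline{\dim}_{\theta_1}E$ and $\varepsilon>0$; by definition there is $\Delta>0$ such that for every $\delta\in(0,\Delta]$ there is a cover $\{U_i\}$ of $E$ with $\delta^{1/\theta_1}\le |U_i|\le\delta$ and $\sum_i |U_i|^s\le\varepsilon$. I would partition the index set into $I_1=\{i:|U_i|\ge \delta^{1/\theta_2}\}$ and $I_2=\{i:|U_i|<\delta^{1/\theta_2}\}$, keep $U_i$ for $i\in I_1$, and enlarge each $U_i$ with $i\in I_2$ to some $V_i\supset U_i$ of diameter exactly $\delta^{1/\theta_2}$; the new family is then $\theta_2$-admissible at scale $\delta$. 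Setting $t:=s\theta_2/\theta_1\ge s$, I would bound the resulting $t$-sum in two pieces. Since $|U_i|\le\delta\le 1$,
\[
\sum_{I_1}|U_i|^t \,\le\, \sum_{I_1}|U_i|^s \,\le\, \varepsilon.
\]
For $I_2$, each retained $U_i$ satisfies $|U_i|\ge\delta^{1/\theta_1}$, contributing at least $\delta^{s/\theta_1}$ to the original $s$-sum; hence $|I_2|\le\varepsilon\,\delta^{-s/\theta_1}$, and the choice of $t$ gives
\[
\sum_{I_2}|V_i|^t \,=\, |I_2|\,\delta^{t/\theta_2} \,\le\, \varepsilon\,\delta^{t/\theta_2-s/\theta_1} \,=\, \varepsilon.
\]
Thus the new $t$-sum is at most $2\varepsilon$, which yields $\overline{\dim}_{\theta_2}E\le s\theta_2/\theta_1$; sending $s\downarrow \overline{\dim}_{\theta_1}E$ gives the displayed inequality.

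I expect the main conceptual hurdle to be the choice of the new exponent $t$: it must be large enough that the enlarged sets in $I_2$ do not blow up the sum, yet small enough that $t/\theta_2\to s/\theta_1$ as $\theta_2\to\theta_1$, and the value $t=s\theta_2/\theta_1$ balances these requirements exactly. The argument for $\underline{\dim}_\theta$ is parallel; although its definition only supplies one admissible scale $\delta$ rather than every small $\delta$, the conversion above is local to a single fixed $\delta$, so the same construction delivers $\underline{\dim}_{\theta_2}E\le (\theta_2/\theta_1)\underline{\dim}_{\theta_1}E$. Combined with monotonicity, this proves continuity of both $\overline{\dim}_\theta E$ and $\underline{\dim}_\theta E$ on $(0,1]$.
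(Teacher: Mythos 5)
Your proof is correct. A small bookkeeping point: the paper does not give its own proof of this proposition; it simply cites Falconer--Fraser--Kempton \cite{FFK} and Fraser \cite{Fra1}, so the relevant comparison is with those references. Your argument follows the same overall strategy used there: establish monotonicity in $\theta$ (every $\theta_2$-admissible cover is $\theta_1$-admissible for $\theta_1<\theta_2$), and then obtain a one-sided comparison by enlarging the too-small covering sets to diameter $\delta^{1/\theta_2}$ and controlling the resulting sum. The difference is in the quantitative bound. You bound the cardinality of the index set $I_2$ by $\varepsilon\,\delta^{-s/\theta_1}$ using the $s$-sum itself, which yields the multiplicative inequality $\uid[\theta_2]E \le (\theta_2/\theta_1)\,\uid[\theta_1]E$. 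The argument in \cite{FFK} exploits the ambient space $\mathbb{R}^d$ to bound the contribution of the enlarged sets, producing the sharper estimate $\uid[\theta_2]E \le \uid[\theta_1]E + (1-\theta_1/\theta_2)(d - \uid[\theta_1]E)$. Both inequalities collapse to equality as $\theta_2\to\theta_1$ and, combined with monotonicity, give continuity on $(0,1]$ by the same two-sided squeeze you describe; your version is marginally more elementary because it avoids any volume or packing argument in $\mathbb{R}^d$, at the cost of a weaker constant. One further point worth stating explicitly: the exponent $t=s\theta_2/\theta_1$ satisfies $t\ge s$ precisely because $\theta_2>\theta_1$, and the inequality $\sum_{I_1}|U_i|^t\le\sum_{I_1}|U_i|^s$ requires $|U_i|\le 1$, which holds once $\Delta\le 1$; you use this implicitly, and it is worth recording.
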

Note that dimension spectra $\lid E$ and $\uid E$ are not necessarily continuous at $\theta =0$, see Example \ref{exm1} in Section~\ref{sec_VEID}.

Since intermediate dimensions provide a continuum between Hausdorff and box-counting dimensions, it is natural to investigate the dimensions spectra for the fractals sets with different Hausdorff and box-counting dimensions. In \cite{BanKol}, Banaji and Kolossvary studied intermediate dimensions for a class of non-typical self-affine sets, named Bedford-McMullen carpets, and they determined a precise formula for the intermediate dimensions of  Bedford–McMullen carpets for the whole spectrum of $\theta \in [0,1]$. In \cite{B,BFF}, Burrell, Falconer and Fraser show that the intermediate dimensions of the projection of a set $E\in \R^d $ by “intermediate dimension profiles”.

	\subsection{Moran sets}	
	
	Since Moran sets are a class of important fractal sets, they are frequently used as a testing ground on questions and conjectures of fractal, see \cite{Wen01}. In this paper, we  investigate the properties of intermediate dimensions for Moran sets.

	Let $\{n_{k}\}_{k\geq 1}$ be a sequence of integers greater than or equal to $2.$ For
	each $k=1,2,\cdots$, we write
	$$
	\Sigma^{k}=\{u_{1}u_{2}\cdots u_{k}:1\leq u_{j}\leq
	n_{j},\  j\leq k\}  \qquad  \textit{and } \qquad\Sigma^{*}=\bigcup _{k=0}^{\infty }\Sigma^{k}
	$$
	for the set of words of length $k$ and for the set of all finite words, respectively,  with $
	\Sigma^{0}=\{\emptyset \}$ containing only the empty word $\emptyset$.  We write
	$$
	\Sigma^{\infty}=\{\mathbf{u}=u_{1}u_{2}\cdots u_{k}\cdots :1\leq u_{k}\leq
	n_{k},\  k=1,2,\cdots \}
	$$
	for the set of words with infinity length, and we  topologize $\Sigma^\infty$ by using the metric
	$d(\mathbf{u},\mathbf{v})=2^{-|\mathbf{u}\wedge  \mathbf{v}|}$ for distinct $\mathbf{u},\mathbf{v} \in \Sigma^\infty$ to make $\Sigma^\infty$ into a compact metric space.
	For each $\mathbf{u}=u_1 \dots u_k \in \Sigma^*$, we write $\mathbf{u}^* =u_1 \dots u_{k-1}$.  Given $\mathbf{u}\in \Sigma^l$, for $\mathbf{v}\in \Sigma^k$ where $k\geq l$ or $\mathbf{v}\in\Sigma^\infty $,  we write $\mathbf{u}\prec \mathbf{v}$ if $u_i =v_i$ for all $i=1,2,\ldots l$.
	
	We define the \textit{cylinders} $\mathcal{C}_\mathbf{u}=\{\mathbf{v}\in \Sigma^\infty : \mathbf{u}\prec \mathbf{v}\}$ for $\mathbf{u}\in \Sigma^*$; the set of cylinders $\{\mathcal{C}_\mathbf{u} : \mathbf{u} \in \Sigma^* \}$ forms a base of open and closed neighborhoods for $\Sigma^\infty$. We term a subset $A$ of $\Sigma^*$ a
	\textit{cut set} if $\Sigma^\infty\subset\bigcup_{\mathbf{u}\in
		A}\mathcal{C}_\mathbf{u}$, where
	$\mathcal{C}_\mathbf{u}\bigcap\mathcal{C}_{\mathbf{v}}=\emptyset$ for  all $\mathbf{u}\neq
	\mathbf{v}\in A$. It is equivalent to that, for every $\mathbf{w}\in \Sigma^\infty$, there is a
	unique sequence $\mathbf{u}\in A$ with $|\mathbf{u}|<\infty$ such that $\mathbf{u}\prec \mathbf{w}$.

	Suppose that $
	J\subset \mathbb{R}^{d}$ is a compact set with $\mbox{int}(J)\neq
	\varnothing $ (we always write int($\cdot )$ for the interior of a set). Let $
	\{\phi _{k}\}_{k\geq 1}$ be a sequence of positive real vectors where $\phi
	_{k}=(c_{k,1},c_{k,2},\cdots, c_{k,n_{k}})$ and $\Sigma
	_{j=1}^{n_{k}}(c_{k,j})^{d}\leq 1$ for every integer $k>0$. 
	We say the
	collection $\mathcal{F}=\{J_{\mathbf{u}}: \mathbf{u}\in \Sigma^*\}$ of closed
	subsets of $J$ fulfills the \textit{Moran structure} if it satisfies the
	following Moran structure conditions (MSC):
	\begin{itemize}
		\item[(1).] For each $\mathbf{u}\in \Sigma^*$, $J_{\mathbf{u}}$ is geometrically similar to $J$, i.e., there exists a similarity $\Psi_{\mathbf{u}}:\mathbb{R}	^{d}\rightarrow \mathbb{R}^{d}$ such that $J_{\mathbf{u}}=\Psi_{\mathbf{u}}(J)$. We write $J_{\emptyset }=J$ for empty word $\emptyset $.
		\item[(2).] For all $k\in \mathbb{N}$ and $\mathbf{u}\in \Sigma^{k-1}$, the elements $J_{\mathbf{u}1}, J_{\mathbf{u}2},\cdots ,J_{\mathbf{u}{n_{k}}}$ of $\mathcal{F}$
		are the subsets of $J_{\mathbf{u}}$ with disjoint interiors, i.e., $\mbox{int}(J_{\mathbf{u}i})\cap \mbox{int}(J_{\mathbf{u}i^{\prime }})=\varnothing $
		for $i\neq i^{\prime }$. Moreover, for all $1\leq i\leq n_{k} $,
		\begin{equation*}
			\frac{|J_{\mathbf{u}i}|}{|J_{\mathbf{u}}|}=c_{k,i},
		\end{equation*}
		where $|\cdot |$ denotes the diameter of a set.
	\end{itemize}
	The non-empty compact set
	\begin{equation}\label{attractor}
		E=E(\mathcal{F})=\bigcap\nolimits_{k=1}^{\infty }\bigcup\nolimits_{\mathbf{u}\in \Sigma^{k}}J_{\mathbf{u}}
	\end{equation}
	is called a \textit{Moran set} determined by $\mathcal{F}$. In particular, if for each integer $k\geq 1$,  all entries of the vector $\phi_{k}=(c_{k,1},c_{k,2},\cdots ,c_{k,n_{k}})$  are identical, that is
	$$c_{k,i}=c_k, $$
for every  $i=1,2,\ldots , n_k$,	we call $E$ is a \textit{homogeneous Moran set}.  For all $\mathbf{u}\in \Sigma^{k}$, the elements $J_{\mathbf{u}}$ are called \textit{\ $k$th-level basic sets} of $E$.

For all $k^{\prime }>k\geq 0,$ let $s_{k,k^{\prime }}$ be the unique real solution
	of the equation $\Delta _{k,k^{\prime }}(s)=1$, where
	\begin{equation}  \label{eqns}
		\Delta _{k,k^{\prime }}(s)=\prod\nolimits_{i=k+1}^{k^{\prime }}\left(
		\sum\nolimits_{j=1}^{n_{i}}(c_{i,j})^{s}\right) .
	\end{equation}
	For simplicity, we often write
	\begin{equation}\label{s1}
		s_k=s_{0,k}.
	\end{equation}
	Let $s_{\ast }$, $s^{\ast }$ and $s^{\ast \ast }$  be  the real numbers given respectively  by
	\begin{equation}\label{def_s***}
		s_{\ast }=\liminf_{m\rightarrow \infty }s_{m}, \quad
		s^{\ast }=\limsup_{m\rightarrow \infty }s_{m}, \quad  s^{\ast \ast }=\lim_{m\rightarrow \infty }\left(\sup\nolimits_{k} s_{k,k+m}\right).
	\end{equation}
	We write
	\begin{equation*}
		c_*=\inf_{k,j} c_{k,j}.
	\end{equation*}
	It was shown in \cite{HL96,LLMX, Wen00,Wen01} that if $c_{*}>0,$ then
	\begin{equation*}
		\hdd E=s_{\ast }, \quad \pkd E=\ubd E=s^{\ast }, \quad\asd E=s^{\ast \ast}.
	\end{equation*}

	The dimension theory of Moran sets has been studied extensively, and we refer the readers to \cite{HL96,Wen00,Wen01} for details and references therein. Note that, in the definition	of Moran sets, the position of $J_{\mathbf{u}i}$ in $J_\mathbf{u}$ is very flexible, and the contraction ratios	may also vary at each level. Therefore the structures of Moran sets are more complex than self-similar sets, and in general, the inequality
	$$\hdd E \leq \lbd E \leq \ubd E$$
	holds strictly for Moran fractals. The general lower box dimension formula for Moran	sets is still an open question. Except providing various examples, Moran sets are also useful tools for analysing properties of fractal sets in various studies, for example,	see [28] and references therein for applications.

	\subsection{Main conclusions}
	To study the intermediate dimensions, we have to analyse the covers of Moran sets.	Given $\delta>0$ and $\theta \in (0,1]$, we write
	\begin{eqnarray}\label{def_stheta}
		s_{\delta , \theta}&=&\min\big\{s:\sum_{\mathbf{u}\in \mathcal{M}} |J_\mathbf{u}|^s =1 \textit{ where $\mathcal{M}$ is a cut set such that}  \\
		&&\hspace{2cm}\delta^{\frac{1}{\theta}} < |J_{\mathbf{u}^*}| \textit{ and }  |J_\mathbf{u}|\le \delta \textit { for each $\mathbf{u} \in \mathcal{M}$ }\big\}.  \nonumber
	\end{eqnarray}
	Let $s^\theta$ and $s_\theta$  be  the upper and lower limits of $s_{\delta , \theta}$, respectively, that is
	\begin{equation}\label{s^*}
		s^\theta =\limsup_{\delta \rightarrow 0 } s_{\delta , \theta}, \qquad
		s_\theta=\liminf_{\delta \rightarrow 0 }s_{\delta , \theta}.
	\end{equation}
For $\theta=0$, we set
	$$
	s^\theta =s_\theta =s_*.
	$$
	
	Since geometric structure of Moran sets varies considerably between $c_*>0$ and $c_*=0$, we first state our conclusion for the Moran sets with $c_*>0$.
	\begin{thm}\label{thm_1}
		Let $E$ be a Moran set given by \eqref{attractor} with $c_*>0$. Then  the upper and lower  $\theta$-intermediate dimensions are given by
		$$
		\uid E=s^\theta, \qquad
		\lid E=s_\theta,
		$$
		where $s^\theta$ and $s_\theta$ are  given by \eqref{s^*}.
	\end{thm}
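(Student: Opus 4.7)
The plan is to identify the infimum defining $s_{\delta,\theta}$ in \eqref{def_stheta} with an infimum over cut-set covers of $E$ by basic sets, and to show that any admissible cover in the definition of $\uid E$ or $\lid E$ can be replaced by such a cut set with a comparable $s$-sum. The hypothesis $c_*>0$ plays two roles: it gives the uniform ratio $|J_\mathbf{u}|/|J_{\mathbf{u}^*}|\ge c_*$, and it supplies a uniform bounded-overlap constant through a volume-packing argument in $\R^d$.

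For the upper bounds $\uid E\le s^\theta$ and $\lid E\le s_\theta$, fix $\theta\in(0,1]$ and $\theta'\in(0,\theta)$, and take an optimal cut set $\mathcal{M}=\mathcal{M}(\delta)$ from \eqref{def_stheta}. Every $\mathbf{u}\in\mathcal{M}$ satisfies $|J_\mathbf{u}|\le\delta$ and $|J_\mathbf{u}|\ge c_*|J_{\mathbf{u}^*}|>c_*\delta^{1/\theta}$, so once $\delta$ is small enough that $\delta^{1/\theta'}\le c_*\delta^{1/\theta}$, the family $\{J_\mathbf{u}\}_{\mathbf{u}\in\mathcal{M}}$ is an admissible cover in the definition of $\overline{\dim}_{\theta'} E$ and $\underline{\dim}_{\theta'} E$. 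Since $\sum_{\mathbf{u}\in\mathcal{M}}|J_\mathbf{u}|^{s_{\delta,\theta}}=1$ and $|J_\mathbf{u}|\le\delta$, one has $\sum_{\mathbf{u}\in\mathcal{M}}|J_\mathbf{u}|^s\le\delta^{s-s_{\delta,\theta}}\to 0$ as $\delta\to 0$ for any $s>s_{\delta,\theta}$. Choosing $s$ slightly above $s^\theta$ (respectively $s_\theta$) yields $\overline{\dim}_{\theta'} E\le s^\theta$ (resp.\ $\underline{\dim}_{\theta'} E\le s_\theta$); sending $\theta'\nearrow\theta$ and invoking the continuity of intermediate dimensions on $(0,1]$ completes the bound.

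For the matching lower bounds, start from any admissible cover $\{U_i\}$ of $E$ with $\delta^{1/\theta}\le|U_i|\le\delta$ and $\sum_i|U_i|^s\le\varepsilon$, and for each $i$ set
$$\mathcal{N}_i=\{\mathbf{u}\in\Sigma^*:J_\mathbf{u}\cap U_i\neq\emptyset,\ |J_\mathbf{u}|\le|U_i|<|J_{\mathbf{u}^*}|\}.$$
The sets $\{J_\mathbf{u}:\mathbf{u}\in\mathcal{N}_i\}$ have pairwise disjoint interiors, diameter in $(c_*|U_i|,|U_i|]$, and all lie in a $|U_i|$-neighbourhood of $U_i$, so a volume-packing bound in $\R^d$ gives $|\mathcal{N}_i|\le N$ for a constant $N=N(c_*,d)$. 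Because every $\mathbf{w}\in\Sigma^\infty$ determines a point of $E$ lying in some $U_i$, its shortest prefix $\mathbf{u}$ with $|J_\mathbf{u}|\le|U_i|$ lies in $\mathcal{N}_i$; hence $\mathcal{N}=\bigcup_i\mathcal{N}_i$ covers $\Sigma^\infty$, and its prefix-minimal elements form a cut set $\mathcal{M}'$ admissible in \eqref{def_stheta} with $|J_{\mathbf{u}^*}|>\delta^{1/\theta}$, $|J_\mathbf{u}|\le\delta$, and
$$\sum_{\mathbf{u}\in\mathcal{M}'}|J_\mathbf{u}|^s\le\sum_{\mathbf{u}\in\mathcal{N}}|J_\mathbf{u}|^s\le N\sum_i|U_i|^s\le N\varepsilon.$$
Taking $\varepsilon<1/N$ forces $s_{\delta,\theta}<s$, and extracting the appropriate $\limsup$ (for $\uid E$) or $\liminf$ (for $\lid E$) as $\delta\to 0$ then yields $s^\theta\le\uid E$ and $s_\theta\le\lid E$.

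The main obstacle is the scale mismatch in the upper bound: cut sets in \eqref{def_stheta} produce diameters in the narrower range $(c_*\delta^{1/\theta},\delta]$ rather than the nominal $[\delta^{1/\theta},\delta]$, so the factor $c_*$ must be absorbed by perturbing $\theta$ to $\theta'<\theta$ and appealing to continuity of $\uid E,\lid E$ on $(0,1]$; the case $\theta=0$ is trivial, since $s^0=s_0=s_*=\hdd E$ by the dimension formula recalled after \eqref{s1}. A secondary difficulty is verifying that the prefix-minimal antichain $\mathcal{M}'$ in the lower bound is a genuine cut set of $\Sigma^\infty$ (not merely a cover of $E$) and that the parental diameter condition $|J_{\mathbf{u}^*}|>\delta^{1/\theta}$ is preserved; both rest on the Moran structure (MSC) together with $c_*>0$, which is why the theorem is restricted to this case.
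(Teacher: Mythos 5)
Your proof is correct, and the lower-bound direction ($\uid E\ge s^\theta$, $\lid E\ge s_\theta$) follows essentially the same path as the paper: both use $c_*>0$ to get a uniform bounded-overlap constant (your $N$, the paper's Lemma~\ref{finite intersection}), collect the level sets $\mathcal{N}_i$ (the paper's $A(U)$), and extract a cut set from the union whose weighted sum is dominated by $N\sum|U_i|^s$. Your care in checking that the prefix-minimal antichain really is a cut set of $\Sigma^\infty$ and that the parental scale bound survives is welcome detail that the paper leaves implicit.

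The upper-bound direction is where you genuinely diverge. The paper's Lemma~\ref{delta cover} \emph{fattens} the small basic sets $J_\mathbf{u}$ (replacing them by slightly larger $U_\mathbf{u}$ with $\delta^{1/\theta}\le|U_\mathbf{u}|\le\delta$) so that the cut-set cover is already admissible at the target parameter $\theta$, and then absorbs the extra factor into the $c_*^{-\beta}\delta^{\beta-\gamma}$ term. You instead keep the honest basic sets $J_\mathbf{u}\in(c_*\delta^{1/\theta},\delta]$, observe that they are admissible for a slightly smaller parameter $\theta'<\theta$ once $\delta$ is small, and then recover $\theta$ by sending $\theta'\nearrow\theta$ via continuity of $\uid$ and $\lid$ on $(0,1]$. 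Both arguments are valid. What the paper's version buys is that it is self-contained (no appeal to the continuity theorem) and, more importantly, that Lemma~\ref{delta cover} is stated for a general Moran set and is reused verbatim in the $c_*=0$ case (proof of Theorem~\ref{thm_3}); your perturbation trick would also work there, but the fattening lemma makes the two theorems share a unified cover construction. What your version buys is a shorter argument that avoids the explicit cover $\mathcal{F}_\mathcal{M}$, at the cost of treating continuity as a black box. Your remark that the case $\theta=0$ reduces to the Hausdorff dimension formula $s_*=\hdd E$ is correct and the paper handles it the same way (by definition $s^0=s_0=s_*$).
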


Unlike the self-similar sets, the above theorem implies that  the intermediate dimension of Moran sets does not necessarily exist unless that $s^\theta=s_\theta$. Furthermore,  the upper intermediate dimension of Moran sets often has more complex behaviours,  see  Example \ref{exm1} and Example \ref{exm2} in Section \ref{sec_VEID}.

	Let $E$ be a homogeneous Moran set, that is, for every  $k\geq 1$,  we have that $c_{k,i}=c_k$ for $i=1,2,\ldots, n_k$. For each integer $k\geq 1$, there exists a unique integer $l(k,\theta)=l$ such that
	\begin{equation}\label{def_lk}
		c_1c_2\ldots c_l\leq (c_1c_2\ldots c_k)^{\frac{1}{\theta}}<c_1c_2\ldots c_{l-1}.
	\end{equation}
	The upper and lower intermediate dimensions of homogeneous Moran sets have the following simplified forms.
	\begin{cor}\label{cor_HMS}
		Let $E$ be a homogeneous Moran set with $c_*>0$. Then
\begin{eqnarray*}
 \uid E &=&\limsup_{k\to\infty} \min_{k\leq m\leq l(k,\theta)} -\frac{\log n_1 \dots n_m}{\log c_1 \dots c_m}, \\
 \lid E &=&\liminf_{k\to\infty}  -\frac{\log n_1 \dots n_k}{\log c_1 \dots c_k}=\hdd E, 
\end{eqnarray*}
		where  $l(k,\theta)$ is given by  \eqref{def_lk}.
	\end{cor}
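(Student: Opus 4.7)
By Theorem~\ref{thm_1} it suffices to identify $s^\theta$ and $s_\theta$ for a homogeneous Moran set. Writing $r_m = c_1\cdots c_m$ and $N_m = n_1\cdots n_m$, we have $|J_\mathbf{u}| = |J|\,r_{|\mathbf{u}|}$ depending only on level, so the constraints $\delta^{1/\theta} < |J_{\mathbf{u}^*}|$ and $|J_\mathbf{u}| \leq \delta$ in \eqref{def_stheta} translate into $|\mathbf{u}| \in [k(\delta), l(\delta)]$, where $k(\delta)$ is the smallest $k$ with $|J|r_k \leq \delta$ and $l(\delta)$ the smallest $l$ with $|J|r_l \leq \delta^{1/\theta}$.

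The main technical step I would carry out is a single-level reduction:
$$
s_{\delta,\theta} \;=\; \min_{k(\delta)\le m\le l(\delta)} s_m, \qquad s_m := -\frac{\log N_m}{\log(|J|r_m)}.
$$
The ``$\le$'' direction is immediate since each $\Sigma^m$ with $m\in[k(\delta),l(\delta)]$ is an admissible cut set and satisfies $\sum_{\mathbf{u}\in\Sigma^m}|J_\mathbf{u}|^{s_m}=1$. For the ``$\ge$'' direction, the point is that refining any word $\mathbf{u}$ at level $j$ multiplies its contribution to $\sum |J_\mathbf{v}|^s$ by the factor $n_{j+1}c_{j+1}^s$, which in the homogeneous case depends only on $j$, not on $\mathbf{u}$. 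An inductive symmetry argument on the Moran tree (each subtree rooted at a level-$k$ word is isomorphic, so the optimal sub-cut-set can be taken identical across them, and is itself single-level by induction) shows that $\min_\mathcal{M}\sum|J_\mathbf{v}|^s = \min_{k(\delta)\le m\le l(\delta)} N_m(|J|r_m)^s$; comparing at $s=s_\mathcal{M}$ then gives $s_\mathcal{M}\ge\min_m s_m$.

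The remainder is asymptotic book-keeping. Set $\tilde s_m := -\log N_m/\log r_m$, which is the expression appearing in the statement. The Moran condition $n_k c_k^d\le 1$ yields $\log N_m \le -d\log r_m$, whence $s_m-\tilde s_m = O(1/|\log r_m|)\to 0$ and also $\tilde s_{m+1}-\tilde s_m\to 0$, so $(\tilde s_m)$ varies slowly. Moreover, for $\delta\in(|J|r_k,|J|r_{k-1}]$ one has $k(\delta)=k$, and $l(\delta)$ differs from the $l(k,\theta)$ of the statement by a uniformly bounded amount (the defining thresholds $\delta^{1/\theta}/|J|$ and $r_k^{1/\theta}$ differ by the fixed factor $|J|^{1/\theta-1}$, and $c_*>0$ forces consecutive $r_m$ to have bounded ratio). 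Combined with the slow variation of $\tilde s_m$, this lets us replace $s_m$ by $\tilde s_m$ and $l(\delta)$ by $l(k,\theta)$ inside both $\limsup_{\delta\to 0}$ and $\liminf_{\delta\to 0}$.

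Taking $\limsup_{\delta\to 0}$ then yields the stated formula for $\uid E$. For $\lid E$, let $m_k\in[k,l(k,\theta)]$ attain the minimum at stage $k$; since $m_k\ge k\to\infty$, the sequence $(\tilde s_{m_k})$ is a subsequence of $(\tilde s_m)$, so
$$
\liminf_{k\to\infty}\min_{k\le m\le l(k,\theta)}\tilde s_m \;=\; \liminf_{k\to\infty}\tilde s_{m_k}\;\ge\;\liminf_{m\to\infty}\tilde s_m,
$$
and the reverse inequality is trivial from $\min\le\tilde s_k$. Hence $\lid E = \liminf_{m\to\infty}\tilde s_m = s_\ast = \hdd E$ by the classical Moran formula recalled in the introduction. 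The main obstacle is the single-level reduction of the second step, where the combinatorics of arbitrary cut sets in the Moran tree must be reduced to the much simpler min over levels; the $|J|$-factor and the mismatch between $l(\delta)$ and $l(k,\theta)$ are routine but require care.
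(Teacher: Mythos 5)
Your proposal is correct and follows the same broad strategy as the paper (reduce $s^\theta$, $s_\theta$ from Theorem~\ref{thm_1} to extrema of $s_m$ over the level window $[k(\delta),l(\delta)]$ and then do asymptotic bookkeeping via slow variation of $s_m$), but the key combinatorial step is handled by a genuinely different argument. The paper proves the single-level reduction via Lemma~\ref{lem_1}, a bottom-up induction ``collapsing'' the deepest level of an arbitrary cut set one step at a time, which works for general (not necessarily homogeneous) Moran sets; the paper then specialises in Proposition~\ref{Prop_HMS}. You instead exploit homogeneity directly: since the weight $|J_\mathbf{v}|^s$ depends only on $|\mathbf{v}|$ and the tree is level-regular, the cleanest form of your sketch is to note that for any cut set $\mathcal{M}$ one has $\sum_{m}\#(\mathcal{M}\cap\Sigma^m)/N_m=1$, so $\sum_{\mathbf{v}\in\mathcal{M}}|J_\mathbf{v}|^s=\sum_m\frac{\#(\mathcal{M}\cap\Sigma^m)}{N_m}\cdot N_m(|J|r_m)^s$ is a weighted average of the single-level values and hence bounded below by their minimum; evaluating at $s=s_\mathcal{M}$ gives $s_\mathcal{M}\ge\min_m s_m$. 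This is shorter than Lemma~\ref{lem_1} but, unlike it, would not extend to inhomogeneous $\phi_k$. Your ``inductive symmetry'' phrasing is a little vague as written and should be replaced by this averaging argument (or an actual induction on $K_\mathcal{M}-L_\mathcal{M}$) to be airtight. Two further points where you go slightly beyond what the paper writes out: you track the $|J|\neq 1$ normalisation and the resulting $O(1/|\log r_m|)$ discrepancy between $s_m$ and $\tilde s_m$ (the paper simply sets $|J|=1$), and you supply the subsequence argument showing $\liminf_k\min_{k\le m\le l(k,\theta)}\tilde s_m=\liminf_m\tilde s_m=s_*$, which the paper leaves implicit in the one-line proof of the corollary.
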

	
	The dimension formulas of Moran sets with $c_*=0$ are much more difficult to compute since the contraction  ratios in the vectors $\phi_k$ may decrease to $0$ extremely fast as $k$ tends to $\infty$.
	Therefore, we use the following  terms to control the decay speed of  $\phi_k$,
	$$
	\underline{c}_k=\min_{1\le j \le n_k} \{c_{k,j}\}, \quad  \textit{and }\quad M_k=\max_{\mathbf{u} \in \Sigma^k} \lvert J_\mathbf{u} \rvert,
	$$
see Example \ref{exm3} in Section \ref{sec_VEID}.	Under an extra assumption, we obtain the intermediate dimensions for Moran sets with $c_*=0$.
	\begin{thm}\label{thm_3}
		Let $E$ be a Moran set given by \eqref{attractor} with $c_*=0$. Suppose that
		$$
		\lim_{k\to +\infty} \frac{\log{\underline{c}_k}}{\log{M_k}}=0.
		$$
		Then the upper and lower intermediate dimensions are given by
		$$
		\uid E=s^\theta, \qquad
		\lid E=s_\theta,
		$$
		where $s^\theta$ and $s_\theta$ are given by \eqref{s^*}.
	\end{thm}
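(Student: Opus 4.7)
The plan is to adapt the architecture of the proof of Theorem~\ref{thm_1}, using the hypothesis $\log\underline{c}_k/\log M_k\to 0$ as a quantitative replacement for $c_*>0$ wherever the latter was needed.

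The lower bounds $\lid E\geq s_\theta$ and $\uid E\geq s^\theta$ transfer verbatim from the proof of Theorem~\ref{thm_1}. Given a cover $\{U_i\}$ of $E$ with $\delta^{1/\theta}\leq |U_i|\leq\delta$, each $U_i$ is replaced by a bounded family (in terms of $d$) of basic sets $J_\mathbf{u}$, where $\mathbf{u}$ runs over the shortest words with $J_\mathbf{u}\cap U_i\neq\emptyset$ and $|J_\mathbf{u}|\leq |U_i|$. The resulting collection contains a cut set admissible in the definition of $s_{\delta,\theta}$, and a direct estimate gives $\sum_\mathbf{u}|J_\mathbf{u}|^s\leq C_d\sum_i|U_i|^s$. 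Because $s<s_{\delta,\theta}$ forces $\sum|J_\mathbf{u}|^s>1$ for every admissible cut set, taking the appropriate $\liminf$ and $\limsup$ as $\delta\to 0$ delivers the two inequalities without any hypothesis on $c_*$.

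For the upper bounds $\uid E\leq s^\theta$ and $\lid E\leq s_\theta$, fix $s>s^\theta$ (respectively $s>s_\theta$). For each small $\delta$ let $\mathcal{M}_\delta$ be a cut set realising $s_{\delta,\theta}$; any $\mathbf{u}\in\mathcal{M}_\delta$ of length $k$ satisfies $\underline{c}_k\delta^{1/\theta}<|J_\mathbf{u}|\leq\delta$. Split $\mathcal{M}_\delta=\mathcal{A}_\delta\cup\mathcal{B}_\delta$ according to whether $|J_\mathbf{u}|\geq\delta^{1/\theta}$ or $|J_\mathbf{u}|<\delta^{1/\theta}$, and replace each $\mathbf{u}\in\mathcal{B}_\delta$ by a ball $B_\mathbf{u}$ of diameter $\delta^{1/\theta}$ containing $J_\mathbf{u}$; the resulting family is admissible at scale $\delta$ for the $\theta$-intermediate dimension. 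The contribution of $\mathcal{A}_\delta$ is dominated by
\[
\sum_{\mathbf{u}\in\mathcal{M}_\delta}|J_\mathbf{u}|^s\leq \delta^{s-s_{\delta,\theta}}\sum_{\mathbf{u}\in\mathcal{M}_\delta}|J_\mathbf{u}|^{s_{\delta,\theta}}=\delta^{s-s_{\delta,\theta}},
\]
using $|J_\mathbf{u}|\leq\delta$ and the defining identity $\sum|J_\mathbf{u}|^{s_{\delta,\theta}}=1$.

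The hypothesis is used to control the replacement sum $|\mathcal{B}_\delta|(\delta^{1/\theta})^s$. For $\mathbf{u}\in\mathcal{M}_\delta$ of length $k$ the constraint $|J_{\mathbf{u}^*}|>\delta^{1/\theta}$ forces $M_{k-1}>\delta^{1/\theta}$, so every level $k$ appearing in $\mathcal{M}_\delta$ tends to $\infty$ as $\delta\to 0$; the condition $\log\underline{c}_k/\log M_k\to 0$ then produces a function $\eta(\delta)\to 0$ such that $\underline{c}_k\geq\delta^{\eta(\delta)/\theta}$ uniformly over every level $k$ occurring in $\mathcal{M}_\delta$. Combining with $|J_\mathbf{u}|>\underline{c}_k\delta^{1/\theta}$ for $\mathbf{u}\in\mathcal{B}_\delta$ and with the bound $\sum_{\mathbf{u}\in\mathcal{B}_\delta}|J_\mathbf{u}|^s\leq\delta^{s-s_{\delta,\theta}}$ yields
\[
|\mathcal{B}_\delta|\,(\delta^{1/\theta})^s\leq \delta^{s-s_{\delta,\theta}-s\eta(\delta)/\theta},
\]
whose exponent is eventually positive because $s>\limsup s_{\delta,\theta}$ and $\eta(\delta)\to 0$. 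Both contributions vanish as $\delta\to 0$ (respectively along a subsequence), giving the required upper bounds. The main obstacle is the uniform bound $\underline{c}_k\geq\delta^{\eta(\delta)/\theta}$: one must verify that not only the maximum but also the minimum level in $\mathcal{M}_\delta$ tends to $\infty$ as $\delta\to 0$, which uses the monotonicity of $M_k$ together with the constraint $|J_\mathbf{u}|\leq\delta$, and is the place where the weaker hypothesis $\log\underline{c}_k/\log M_k\to 0$ must do the real work.
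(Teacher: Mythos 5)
The lower-bound half of your proposal has a genuine gap. You claim that each $U_i$ is replaced by a ``bounded family (in terms of $d$)'' of basic sets $J_\mathbf{u}$ with $|J_\mathbf{u}|\leq |U_i|<|J_{\mathbf{u}^*}|$, and that this step works ``without any hypothesis on $c_*$.'' That is exactly the statement of Lemma~\ref{finite intersection}, whose proof divides by $c_*^d$ and fails when $c_*=0$: the parent $J_{\mathbf{u}^*}$ at level $k-1$ can have $n_k$ children whose contraction ratios are all $\ll |U_i|/|J_{\mathbf{u}^*}|$, so that arbitrarily many children drop below size $|U_i|$ simultaneously, making $\#A(U_i)$ unbounded even under the hypothesis of the theorem. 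The paper gets around this with Lemma~\ref{finite intersection c_*=0}, which replaces the cardinality bound by the weighted estimate $\sum_k \underline{c}_k^d\,\#D(F,k)\leq C$, and then uses $\lim_k \log\underline{c}_k/\log M_k=0$ (in the form $M_k^{\eta}<\underline{c}_k^d$ for large $k$) to write $|J_\mathbf{u}|^\alpha\leq M_{|\mathbf{u}|}^\eta |U|^{\alpha-\eta}\leq\underline{c}_{|\mathbf{u}|}^d|U|^{\alpha-\eta}$, at the cost of an $\eta$-perturbation of the exponent that is removed at the end. So the hypothesis does real work on the $\geq$ direction too, not only on the $\leq$ direction as you suggest.

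Your upper-bound argument is correct, and it is a genuine variant of the paper's. Both constructions start from a cut set $\mathcal{M}_\delta$ realising $s_{\delta,\theta}$ and fatten the too-small $J_\mathbf{u}$ to diameter $\delta^{1/\theta}$ (this is Lemma~\ref{delta cover}); the difference is in the estimate. You split $\mathcal{M}_\delta$ into $\mathcal{A}_\delta\cup\mathcal{B}_\delta$, bound $\sum_{\mathcal{A}_\delta}|J_\mathbf{u}|^s\leq\delta^{s-s_{\delta,\theta}}$, and count $|\mathcal{B}_\delta|$ via $|J_\mathbf{u}|>\underline{c}_{|\mathbf{u}|}\delta^{1/\theta}\geq\delta^{(1+\eta(\delta))/\theta}$, which indeed gives $|\mathcal{B}_\delta|(\delta^{1/\theta})^s\leq\delta^{s-s_{\delta,\theta}-s\eta(\delta)/\theta}\to 0$; the uniformity of $\eta(\delta)$ over the levels in $\mathcal{M}_\delta$ follows since those levels tend to infinity and $\log\underline{c}_k/\log M_{k-1}\leq\log\underline{c}_k/\log M_k\to 0$. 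The paper avoids the split and the cardinality count by estimating directly
$|U_\mathbf{u}|^\beta<|J_{\mathbf{u}^*}|^\beta\leq(|J_\mathbf{u}|/\underline{c}_{|\mathbf{u}|})^{s_{\delta,\theta}+\beta-\gamma}
\leq |J_\mathbf{u}|^{s_{\delta,\theta}+(\beta-\gamma)/2}\cdot M_{|\mathbf{u}|}^{(\beta-\gamma)/2}/\underline{c}_{|\mathbf{u}|}^{\beta}$
and using the hypothesis in the form $M_k^{(\beta-\gamma)/2}/\underline{c}_k^\beta<1$, which immediately yields $\sum|U_\mathbf{u}|^\beta\leq\delta^{(\beta-\gamma)/2}$. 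Your route is a bit longer but equally valid; the paper's is cleaner because the $\mathcal{B}_\delta$ cardinality never needs to appear explicitly.
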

Similarly, we have the following special conclusion for homogenous Moran sets with $c_*=0$.	
	\begin{cor}\label{thm_4}
		Let $E$ be a homogeneous Moran set with $c_*=0$. Suppose that
		$$
		\lim_{k\to +\infty} \frac{\log{c_k}}{\log{c_1 \dots c_k}}=0.
		$$
		Then
\begin{eqnarray*}
 \uid E &=&\limsup_{k\to\infty} \min_{k\leq m\leq l(k,\theta)} -\frac{\log n_1 \dots n_m}{\log c_1 \dots c_m}, \\
 \lid E &=&\liminf_{k\to\infty}  -\frac{\log n_1 \dots n_k}{\log c_1 \dots c_k}=\hdd E,
\end{eqnarray*}
		where  $l(k,\theta)$ is given by  \eqref{def_lk}.
	\end{cor}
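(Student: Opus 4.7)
The plan is to deduce Corollary \ref{thm_4} from Theorem \ref{thm_3} by mimicking, in the $c_*=0$ regime, the simplification that yields Corollary \ref{cor_HMS} from Theorem \ref{thm_1}. First I would verify the hypothesis of Theorem \ref{thm_3} in the homogeneous setting: since $\underline{c}_k=c_k$ and every $\mathbf{u}\in\Sigma^k$ satisfies $|J_\mathbf{u}|=|J|\,c_1\cdots c_k$, we have $M_k=|J|\,c_1\cdots c_k$ and
\[
\frac{\log \underline{c}_k}{\log M_k}=\frac{\log c_k}{\log|J|+\log(c_1\cdots c_k)},
\]
so the additive constant $\log|J|$ is asymptotically negligible and the hypothesis of Theorem \ref{thm_3} follows from $\log c_k/\log(c_1\cdots c_k)\to 0$. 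Theorem \ref{thm_3} then yields $\uid E=s^\theta$ and $\lid E=s_\theta$ with $s^\theta, s_\theta$ as in \eqref{s^*}.

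Next I would compute $s_{\delta,\theta}$ explicitly. Write $N_m=n_1\cdots n_m$ and $\rho_m=|J|\,c_1\cdots c_m$. All $m$th-level basic sets have the same diameter $\rho_m$, so the admissibility conditions $\delta^{1/\theta}<|J_{\mathbf{u}^*}|$ and $|J_\mathbf{u}|\le\delta$ pin $|\mathbf{u}|$ to a band $m\in[k(\delta),l(k(\delta),\theta)]$, where $k(\delta)$ is the minimal integer with $\rho_{k(\delta)}\le\delta$ and $l$ is as in \eqref{def_lk}. For an admissible cut set $\mathcal{M}$ with $a_m=\#\{\mathbf{u}\in\mathcal{M}:|\mathbf{u}|=m\}$, the uniform Bernoulli measure on $\Sigma^\infty$ gives the identity $\sum_m a_m/N_m=1$, and hence
\[
1=\sum_m a_m\rho_m^s=\sum_m \frac{a_m}{N_m}\,N_m\rho_m^s\ge \min_m N_m\rho_m^s,
\]
forcing $s(\mathcal{M})\ge\min_{m\in[k(\delta),l(k(\delta),\theta)]}-\log N_m/\log\rho_m$, with equality for the uniform-level cut set at the minimizing $m$. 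Thus $s_{\delta,\theta}=\min_{k(\delta)\le m\le l(k(\delta),\theta)}-\log N_m/\log\rho_m$. Since $\log(c_1\cdots c_m)\to-\infty$, the term $\log|J|$ drops out in the limit and $\rho_m$ may be replaced by $c_1\cdots c_m$ inside the $\limsup$ and $\liminf$. Passing to $\limsup_{\delta\to 0}$ gives the claimed formula for $\uid E$. For $\lid E$, the window $[k,l(k,\theta)]$ contains its left endpoint $m=k$, while conversely each $m$ lies in the window $[m,l(m,\theta)]$; these two inclusions collapse the inner min, yielding
\[
\lid E=\liminf_{k\to\infty}-\frac{\log N_k}{\log(c_1\cdots c_k)}=\liminf_{k\to\infty}s_k=s_*,
\]
and the standing hypothesis guarantees that $s_*=\hdd E$ continues to hold when $c_*=0$.

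The main obstacle I anticipate is handling the band endpoints uniformly in $\delta$ when $c_*=0$: because an individual $c_k$ may be arbitrarily small, snapping a continuous $\delta$ to the discrete diameter $\rho_{k(\delta)}$ could in principle shift the optimal level by more than one and spoil the clean min characterization. The hypothesis $\log c_k/\log(c_1\cdots c_k)\to 0$ is used again at this point, guaranteeing that the contribution of any single factor $c_k$ to $-\log N_m/\log(c_1\cdots c_m)$ is asymptotically negligible, so both the $\limsup$ and $\liminf$ simplifications survive the passage from the discrete scales $\rho_m$ to arbitrary $\delta\to 0$.
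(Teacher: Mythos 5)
Your overall strategy matches the paper's: reduce to Theorem \ref{thm_3} (after verifying its hypothesis, which you do correctly, noting the additive $\log|J|$ is asymptotically negligible), then compute $s^\theta$ and $s_\theta$ explicitly in the homogeneous setting. However, the key technical step is carried out by a genuinely different argument. Where the paper obtains the identity $s_{\delta,\theta}=\min_m s_m$ by invoking Lemma \ref{lem_1} --- an inductive ``pruning'' argument on finite cut sets that works for general (non-homogeneous) Moran sets --- you use a uniform Bernoulli measure trick: writing $1=\sum_m (a_m/N_m)(N_m\rho_m^s)\ge\min_m N_m\rho_m^s$ and concluding $s(\mathcal{M})\ge\min_m s_m$, with the minimum realized by the single-level cut set at the minimizing $m$. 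This is cleaner and more conceptual in the homogeneous case, though it exploits the fact that all $m$th-level cylinders carry the same diameter and measure, so it would not replace Lemma \ref{lem_1} in the inhomogeneous Theorem \ref{thm_1}.

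One point you should tighten. You assert ``Thus $s_{\delta,\theta}=\min_{k(\delta)\le m\le l(k(\delta),\theta)}-\log N_m/\log\rho_m$,'' but the admissibility band for a general $\delta$ is $[k(\delta),l(\delta)]$ where $l(\delta)$ is determined by $\delta^{1/\theta}$ itself, and one only has $l(k(\delta)-1,\theta)<l(\delta)\le l(k(\delta),\theta)$; your stated window can strictly contain the true one, so equality may fail. The paper handles this by sandwiching $s_{\delta,\theta}$ between $\min_{k\le m\le l(k,\theta)}s_m$ and $\min_{k\le m\le l(k-1,\theta)}s_m$ (its equation \eqref{sd=sm}) and then invoking Lemma \ref{continue 0} (the $c_*=0$ analogue of Lemma \ref{continue}, which uses exactly your standing hypothesis to get $|s_{k+1}-s_k|\to 0$) to show the two $\limsup$s agree. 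You correctly identify this as the place where the hypothesis is used again, but you should replace ``equality'' with the two-sided estimate and spell out the $|s_{k+1}-s_k|\to 0$ step. The treatment of $\lid E$ and the final identification with $\hdd E$ is fine and consistent with what the paper leaves implicit.
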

	
As we may notice that  all the formulas of intermediate dimensions are implicit functions of $\theta$, it is interesting to give intermediate dimensions in the explicit form of $\theta$. Given an integer $L\ge 2$, we write
	$$\mathcal{F}_L=\Big \{f(\theta)=\frac{La \theta +b}{Lc \theta +c} : a,b,c\in\mathbb{R}, c>a\ge b>0, \frac{a}{b}\in\mathbb{N} \Big\}.$$
In the final conclusion, we show that M\"{o}bius transformations may be used for the upper intermediate dimensions of some homogeneous Moran sets.	
	\begin{prop}\label{Prop_MF}
		Given an integer $L\ge 2$. For every $f\in \mathcal{F}_L$,  there exists a homogeneous Moran set $E$ such that
$$
\uid E=\left\{\begin{array}{lc} f(\theta),  & \textit{  for } \theta \in[\frac{1}{L^2},1];  \\
\hdd E,  & \textit{  for } \theta \in[0,\frac{1}{ L^2}],
\end{array} \right.
$$
 and $\lid E=\hdd E$ for  $\theta\in [0,1]$.
	\end{prop}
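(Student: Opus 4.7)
Given $f\in\mathcal{F}_L$ with parameters $a,b,c$, write $r=a/b\in\mathbb{N}$ and set $h_A:=a/c$, $h_B:=b/c$, so $0<h_B\le h_A<1$. The idea is to build a homogeneous Moran set in $\R$ whose log-contraction variable alternates between two slope regimes in geometrically growing blocks. Fix any integer $N\ge 2$ and use two level types: \emph{type A} with $n_k=N$, $c_k=c_A:=N^{-1/h_A}$, and \emph{type B} with $n_k=N$, $c_k=c_B:=N^{-1/h_B}$; admissibility $n_kc_k<1$ follows from $h_A,h_B<1$, and $c_\ast=\min(c_A,c_B)>0$ so Corollary~\ref{cor_HMS} applies. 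Write $w_A:=-\log c_A$, $w_B:=-\log c_B$, and $W(m):=-\log(c_1\cdots c_m)$. Organize the levels into blocks $j\ge 1$, where block $j$ consists of $B_j:=\lfloor L^{2j}(L-1)/w_B\rfloor$ consecutive type-B levels followed by $A_j:=\lfloor L^{2j+1}(L-1)/w_A\rfloor$ consecutive type-A levels. Then $W(m)$ agrees, up to an $O(1)$ error, with a piecewise linear profile that crosses $L^{2j+1}$ between the B- and A-parts of block $j$ and crosses $L^{2j+2}$ at the end of block $j$. Reparametrizing by $w=W(m)$, the ratio $R(m)=\log(n_1\cdots n_m)/W(m)$ becomes $S(w)/w$ with $S$ piecewise linear of slope $h_B$ on each $[L^{2j},L^{2j+1}]$ and slope $h_A$ on each $[L^{2j+1},L^{2j+2}]$.

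\textbf{Peak/valley limits and Hausdorff dimension.} Applying $R(w)=h+(R(L^{j_0})-h)L^{j_0}/w$ on each segment and iterating over blocks yields
\[
R(L^{2m})\longrightarrow P:=\frac{La+b}{c(L+1)},\qquad R(L^{2m+1})\longrightarrow V:=\frac{a+Lb}{c(L+1)}=f(1/L^2),
\]
with $R$ decreasing from $P$ to $V$ on each B-segment and increasing from $V$ to $P$ on each A-segment. Hence $\liminf_m R(m)=V$, and Corollary~\ref{cor_HMS} gives $\hdd E=V$ together with $\lid E=V=\hdd E$ for all $\theta\in[0,1]$.

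\textbf{Upper intermediate dimension.} Set $\tau:=\log_L(1/\theta)$. For $\theta\in[1/L^2,1]$ (so $\tau\in[0,2]$), Corollary~\ref{cor_HMS} reduces $\uid E$ to $\limsup_k \min_{v\in[u,u+\tau]} R(L^v)$ where $u=\log_L W(k)$. By the $2$-periodicity mod $u\mapsto u+2$ of the pattern, write $u=u_0+s$ with $u_0$ odd (the preceding valley) and $s\in[0,2)$. The decisive case is $s\in[\max(0,1-\tau),1)$: the interval $[u,u+\tau]$ starts in an A-segment, crosses the peak at $u_0+1$, enters the next B-segment, and stops before the next valley. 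The minimum over this interval equals $\min(A(s),B(s))$ where
\[
A(s)=h_A-\gamma L^{-s},\qquad B(s)=h_B+\gamma L^{1-s-\tau},\qquad \gamma:=\frac{L(a-b)}{c(L+1)}.
\]
Since $A$ is increasing and $B$ is decreasing in $s$, $\max_s\min(A,B)$ is attained at the crossing $A(s^\ast)=B(s^\ast)$. Using the identity $h_A-h_B=\gamma(L+1)/L$ one obtains
\[
L^{-s^\ast}=\frac{L+1}{L(1+L\theta)},\qquad A(s^\ast)=h_A-\frac{a-b}{c(1+L\theta)}=\frac{La\theta+b}{c(L\theta+1)}=f(\theta).
\]
A direct check shows the remaining subcases (interval contained in a single segment; or $s+\tau\ge 2$ so that a valley lies in the interval) all produce minima $\le f(\theta)$, so $\uid E=f(\theta)$ on $[1/L^2,1]$. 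For $\theta\in[0,1/L^2]$, $\tau\ge 2$ forces every interval $[u,u+\tau]$ to contain a valley point, hence $\min R\le V$; combined with the general bound $\uid E\ge\hdd E=V$ this gives $\uid E=V=\hdd E$.

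\textbf{Main obstacle.} The heart of the argument is the algebraic miracle that the crossing value $A(s^\ast)=B(s^\ast)$ is exactly the Möbius transformation $f(\theta)=(La\theta+b)/(c(L\theta+1))$; this clean identity relies on the combined choice of geometric block ratio $L$ and per-type slopes $h_A=a/c$, $h_B=b/c$. A secondary technical point is the $O(1)$ error introduced by the floor functions in $B_j$ and $A_j$, which is absorbed by $\limsup$ and $\liminf$ since $W(m)\to\infty$; verifying that the remaining subcases yield strictly smaller minima is elementary but requires careful case analysis to rule out competing maximizers.
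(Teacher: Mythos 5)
Your construction is correct and reaches the same Möbius formula, but by a genuinely different route from the paper. The paper fixes the contraction ratio $c_k=1/Q$ at every level and alternates the branching number $n_k$ between two integers $N=\beta^l$ and $M=\alpha^l$ in geometrically growing blocks of lengths $L^{2n-1}, L^{2n}$; the integrality hypothesis $a/b\in\mathbb{N}$ is there precisely so that both $\alpha^l$ and $\beta^l$ can be made integers. You instead fix $n_k\equiv N$ and alternate the contraction ratio between $c_A=N^{-c/a}$ and $c_B=N^{-c/b}$, so the slope of $\log(n_1\cdots n_m)$ against $W(m)=-\log(c_1\cdots c_m)$ alternates between $h_A=a/c$ and $h_B=b/c$. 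This is a cleaner choice of free parameter: your construction never uses $a/b\in\mathbb{N}$ (so it actually covers a slightly larger class than $\mathcal{F}_L$), and it produces contraction ratios bounded away from $0$, so Corollary~\ref{cor_HMS} applies just as in the paper. Reparametrizing by $u=\log_L W(m)$ turns the admissible window $\{m: k\le m\le l(k,\theta)\}$ into the interval $[u,u+\tau]$ with $\tau=\log_L(1/\theta)$, and the whole $\limsup$-$\min$ problem becomes an optimization over a $2$-periodic sawtooth in the $u$-variable; the paper works directly in the discrete index $k$ and solves for $x_n,y_n$ with the explicit rational functions $f_n,g_n$. Your crossing computation $A(s^\ast)=B(s^\ast)=f(\theta)$ is the same algebraic identity the paper obtains via $f_n(x_n)=g_n(y_n)$. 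I verified the algebra: $P=(La+b)/(c(L+1))$, $V=(a+Lb)/(c(L+1))=f(1/L^2)$, $\gamma=h_A-V=P-h_B=L(a-b)/(c(L+1))$, $h_A-h_B=\gamma(L+1)/L$, and $L^{-s^\ast}=(L+1)/(L(1+L\theta))\in(1/L,1]$ exactly when $\theta\in[1/L^2,1]$, giving $A(s^\ast)=f(\theta)$.

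Two minor points. First, the phrase ``a direct check shows the remaining subcases all produce minima $\le f(\theta)$'' is doing real work; the checks are elementary (for $s<1-\tau$ one needs $A(1-\tau)=h_A-\gamma/(L\theta)\le f(\theta)$, which reduces to $\theta\le 1$; for $s\in[1,2)$ with $s+\tau<2$ one needs $B(1)=h_B+\gamma\theta\le f(\theta)$, same reduction; and if $s+\tau\ge 2$ the window contains a valley so the min tends to $V\le f(\theta)$ since $f$ is increasing). You should write these out rather than assert them. Second, the floor errors in $B_j,A_j$ accumulate to $O(j)$ against $W\sim L^{2j}$, so the transition points land at $u=2j+o(1)$; this is absorbed by the $\limsup$, but it is worth a sentence, together with noting that for small $j$ the floors may be $0$ and one simply starts the block structure from a large enough $j_0$.
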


	\section{Intermediate dimension of Moran sets with $c_*>0$}
	In this section, we study the intermediate dimension of Moran sets with $c_*>0$. First, we state a conclusion for Moran sets regardless of $c_*$, and it is also applicable to Moran sets with $c_*=0$ in the next section.
	\begin{lem}\label{delta cover}
		Given a Moran set $E$,  a real $\delta>0$ and $\theta\in (0,1]$. Let $\mathcal{M}$ be a cut set of $\Sigma^\infty$ such that $\delta^{\frac{1}{\theta}} < |J_{\mathbf{u}^*}| $  and $ |J_\mathbf{u}|\le \delta$  for every $\mathbf{u} \in \mathcal{M} $. Then there exists a cover $\mathcal{F}_{\mathcal{M}}=\{U_\bu: \bu\in \mathcal{M}\}$ of $E$ such that $J_\mathbf{u} \subset U_\mathbf{u}$, $\delta^{\frac{1}{\theta}}\leq |U_\mathbf{u}|\le\delta$ and $|J_\mathbf{u}|\leq |U_\mathbf{u}|<|J_{\mathbf{u}^*}|$ for all $\bu\in \mathcal{M}$.
	\end{lem}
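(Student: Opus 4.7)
The plan is to construct each $U_\mathbf{u}$ by enlarging $J_\mathbf{u}$ only when necessary, so that its diameter lands in the prescribed window. For each $\mathbf{u}\in\mathcal{M}$, set the target diameter
\[
r_\mathbf{u}:=\max\bigl\{|J_\mathbf{u}|,\,\delta^{1/\theta}\bigr\}.
\]
Restricting to the relevant regime $\delta\leq 1$, one has $\delta^{1/\theta}\leq \delta$, hence $r_\mathbf{u}\leq\max\{\delta,\delta^{1/\theta}\}=\delta$. Moreover $|J_\mathbf{u}|<|J_{\mathbf{u}^*}|$ holds strictly because the Moran conditions and $n_k\geq 2$ force every contraction ratio to satisfy $c_{k,i}<1$, and $\delta^{1/\theta}<|J_{\mathbf{u}^*}|$ by hypothesis, so $r_\mathbf{u}<|J_{\mathbf{u}^*}|$. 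Consequently, any set $U_\mathbf{u}\supseteq J_\mathbf{u}$ with $|U_\mathbf{u}|=r_\mathbf{u}$ automatically fulfils $\delta^{1/\theta}\leq |U_\mathbf{u}|\leq\delta$ and $|J_\mathbf{u}|\leq|U_\mathbf{u}|<|J_{\mathbf{u}^*}|$.

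To realize such a $U_\mathbf{u}$ explicitly: if $|J_\mathbf{u}|\geq \delta^{1/\theta}$, take $U_\mathbf{u}=J_\mathbf{u}$. Otherwise pick $x_1,x_2\in J_\mathbf{u}$ that realize the diameter $|J_\mathbf{u}|$, let $y$ be the point on the ray from $x_1$ through $x_2$ with $|x_1-y|=\delta^{1/\theta}$, and set $U_\mathbf{u}=J_\mathbf{u}\cup[x_2,y]$. A triangle-inequality check then gives $|U_\mathbf{u}|=\delta^{1/\theta}$: two points in $J_\mathbf{u}$ lie within $|J_\mathbf{u}|\leq\delta^{1/\theta}$; two points of $[x_2,y]$ lie within $\delta^{1/\theta}-|J_\mathbf{u}|$; and for $z\in J_\mathbf{u}$ and $w\in[x_2,y]$,
\[
|z-w|\leq |z-x_2|+|x_2-w|\leq |J_\mathbf{u}|+(\delta^{1/\theta}-|J_\mathbf{u}|)=\delta^{1/\theta},
\]
with equality attained at $(z,w)=(x_1,y)$. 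Thus $U_\mathbf{u}$ has the required diameter and contains $J_\mathbf{u}$.

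It remains to verify that $\mathcal{F}_\mathcal{M}=\{U_\mathbf{u}:\mathbf{u}\in\mathcal{M}\}$ covers $E$. Any $x\in E$ lies in a nested sequence of basic sets $J_{\mathbf{w}|_k}$ for some $\mathbf{w}\in\Sigma^\infty$; the cut-set property gives a unique $\mathbf{u}\in\mathcal{M}$ with $\mathbf{u}\prec\mathbf{w}$, so $x\in J_\mathbf{u}\subset U_\mathbf{u}$. The only mildly delicate step is arranging that the enlargement has diameter exactly $r_\mathbf{u}$ rather than something larger (a naive use of balls fails since a ball of radius $\delta^{1/\theta}/2$ need not contain $J_\mathbf{u}$); this is why the construction is placed along a diameter segment of $J_\mathbf{u}$. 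Everything else is direct bookkeeping from the definitions.
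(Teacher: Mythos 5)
Your proposal is correct and follows essentially the same strategy as the paper: take $U_\mathbf{u}=J_\mathbf{u}$ when $|J_\mathbf{u}|\ge\delta^{1/\theta}$, and otherwise enlarge $J_\mathbf{u}$ to a set of diameter exactly $\delta^{1/\theta}$, then verify the bounds from the cut-set hypotheses. The only difference is cosmetic: you append a segment along a diameter-realizing chord, while the paper takes the $\tfrac{1}{2}(\delta^{1/\theta}-|J_\mathbf{u}|)$-neighborhood of $J_\mathbf{u}$ (a union of balls centered at points of $J_\mathbf{u}$, which \emph{does} contain $J_\mathbf{u}$ and has diameter exactly $\delta^{1/\theta}$ — so your caution about a single ball of radius $\delta^{1/\theta}/2$ is not aimed at what the paper actually does, though both constructions are valid).
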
	
	
	\begin{proof}
Since $ \mathcal{M}$ is a cut set satisfying that  $ \delta^{\frac{1}{\theta}} < |J_{\mathbf{u}^*}| $  and $ |J_\mathbf{u}|\le \delta$ for all $\mathbf{u} \in \mathcal{M}$. We define a cover  $\mathcal{F}_\mathcal{M}=\{U_{\mathbf{u}}: \mathbf{u}\in \mathcal{M}\}$ of $E$ by setting
		$$
		U_\mathbf{u}=\left\{ \begin{array}{ll}
		J_\mathbf{u}& \textit{ if } |J_\mathbf{u}|\ge\delta^{\frac{1}{\theta}}, \\
		\bigcup_{x\in J_\mathbf{u}} B(x,\frac{\delta^\frac{1}{\theta}- |J_{\mathbf{u}}|}{2}) & \textit{ if } |J_\mathbf{u}|<\delta^{\frac{1}{\theta}},
		\end{array} \right.
		$$
for every $\mathbf{u}\in\mathcal{M}$.

It is clear that  $J_\mathbf{u} \subset U_\mathbf{u}$, $\delta^{\frac{1}{\theta}}\leq |U_\mathbf{u}|\le\delta$ and $|J_\mathbf{u}|\leq |U_\mathbf{u}|<|J_{\mathbf{u}^*}|$   for each $\mathbf{u}\in\mathcal{M}$, and the conclusion holds.
	\end{proof}

Given a Moran set $E$. For sufficiently small $\delta$, we write
	\begin{equation}\label{CSM}
	\mathcal{M}(\delta) =\{\mathbf{u}\in\Sigma^*: |J_\mathbf{u}|\le\delta<|J_{\mathbf{u}^*}| \},
	\end{equation}
	and it is clear that $\mathcal{M}(\delta) $ is a cut set of $\Sigma^\infty$.
	For  $F\subset \mathbb{R}^d$ such that  $E\cap F\ne \emptyset$, we write
\begin{equation}\label{def_Aset}
	A(F)=\{\mathbf{u}:\mathbf{u}\in \mathcal{M}(|F|) , J_\mathbf{u}\cap F \ne \emptyset\}.
\end{equation}
	The following conclusion shows that the number of basic sets of $E$ with the similar size of $F$ is bounded and  independent of $F$.
	\begin{lem}\label{finite intersection}
		Let $E$ be a Moran set given by \eqref{attractor} with $c_*>0$.  Then there exists  a constant $C$ such that for  every  $F \subset \mathbb{R}^d$ such that  $E\cap F\ne \emptyset$,
		$$
		\#A(F)\le C.
		$$
	\end{lem}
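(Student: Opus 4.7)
The plan is to prove this by a standard volume packing argument, using the lower bound on ratios $c_*>0$ together with the assumption that $J$ has non-empty interior.

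First, I observe that the cut set structure gives strong geometric information: for any $\mathbf{u}\in\mathcal{M}(|F|)$, the Moran structure condition combined with $c_*>0$ yields $|J_\mathbf{u}|\ge c_*|J_{\mathbf{u}^*}|>c_*|F|$, so the diameters of the basic sets indexed by $\mathcal{M}(|F|)$ all lie in $(c_*|F|,|F|]$. Moreover, any two distinct $\mathbf{u},\mathbf{v}\in\mathcal{M}(|F|)$ must be incomparable in $\Sigma^*$; letting $\mathbf{w}$ be their longest common prefix, $J_\mathbf{u}$ and $J_\mathbf{v}$ are contained in sibling basic sets $J_{\mathbf{w}i}$ and $J_{\mathbf{w}j}$ with $i\neq j$, so MSC~(2) gives $\mathrm{int}(J_\mathbf{u})\cap\mathrm{int}(J_\mathbf{v})=\emptyset$.

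Next, since $\mathrm{int}(J)\neq\emptyset$, $J$ contains some open ball $B(x_0,r_0)$ with $r_0>0$. Because each $J_\mathbf{u}=\Psi_\mathbf{u}(J)$ for a similarity of ratio $|J_\mathbf{u}|/|J|$, the set $J_\mathbf{u}$ contains an open ball of radius $r_0|J_\mathbf{u}|/|J|\ge (r_0 c_*/|J|)\,|F|$. By the disjointness established above, these inscribed balls are pairwise disjoint across $\mathbf{u}\in A(F)$.

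Finally, fix any point $x\in F$. For every $\mathbf{u}\in A(F)$ we have $J_\mathbf{u}\cap F\neq\emptyset$ and $|J_\mathbf{u}|\le|F|$, so $J_\mathbf{u}\subset B(x,2|F|)$, hence all the disjoint inscribed balls lie inside $B(x,2|F|)$. Comparing Lebesgue volumes in $\mathbb{R}^d$,
\begin{equation*}
\#A(F)\cdot \omega_d\left(\frac{r_0 c_*}{|J|}|F|\right)^d \le \omega_d(2|F|)^d,
\end{equation*}
where $\omega_d$ is the volume of the unit ball. The factor $|F|^d$ cancels, yielding
\begin{equation*}
\#A(F)\le \left(\frac{2|J|}{r_0 c_*}\right)^d=:C,
\end{equation*}
a constant depending only on $J$, $d$, and $c_*$, and in particular independent of $F$. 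There is no real obstacle here beyond setting up the volume comparison correctly; the essential input is that $c_*>0$ prevents basic sets at the cut level from being arbitrarily thin relative to $|F|$, which is exactly what fails in the $c_*=0$ regime treated later in the paper.
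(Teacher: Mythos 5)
Your proof is correct and uses essentially the same volume-packing argument as the paper: each basic set indexed by $A(F)$ has diameter at least $c_*|F|$, lies in $B(x,2|F|)$ for any $x\in F$, and the interiors are pairwise disjoint, so a Lebesgue volume comparison bounds $\#A(F)$ independently of $F$. The only minor difference is that you inscribe pairwise disjoint balls inside each $J_\mathbf{u}$ and track the $|J|$ normalization explicitly, whereas the paper compares $\mathcal{L}^d(\mathrm{int}(J_\mathbf{u}))$ directly.
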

	\begin{proof}
		Given  $F \subset \mathbb{R}^d$ such that  $E\cap F\ne \emptyset$. For every $\bu\in\mathcal{M}(|F|)$, we have that
		$$
 | J_{\mathbf{u}}|\geq c_* | J_{\mathbf{u}^*} | \ge 	c_* |F| ,
		$$
		For $x\in F$, since $J_\mathbf{u} \subset B(x,2\delta)$ for each $ \mathbf{u} \in A(F)$,  it follows that
		\begin{eqnarray*}
			\mathcal{L}^d(B(x,2\delta)) &\geq& \mathcal{L}^d(\mbox{int}(J)) \sum_{{J_\mathbf{u}}\in A(F)}\lvert J_\mathbf{u}\rvert ^d  \\
			&\geq & c_*^d \#A(F)\delta^d \mathcal{L}^d(\mbox{int}(J)).
		\end{eqnarray*}
		By setting
		$$
		C=\frac{2^d \mathcal{L}^d(B(0,1))}{c_*^d \mathcal{L}^d(\mbox{int}(J))},
		$$
		we have that $\#A(F)\le C$, and the conclusion holds.
	\end{proof}

	\begin{proof}[Proof of Theorem \ref{thm_1}]
		Given $\theta \in (0,1]$, for  the upper intermediate dimension,  it is equivalent to show that  $\uid E\leq s^\theta$ and $\uid E \geq s^\theta$.
		
		First, we prove that $\uid E\leq s^\theta$. Arbitrarily choosing $\beta>\gamma>s^\theta$, for each $\epsilon>0$, there exists $\Delta_1>0$ such that  for all $0<\delta<\Delta_1$, we have
\begin{equation}\label{ineq_dbg<e}
		\frac{\delta^{\beta-\gamma}}{c_*^\beta} <\epsilon.
\end{equation}
Recall that  $s^\theta = \limsup_{\delta \to 0}s_{\delta, \theta}$ where $s_{\delta, \theta}$ is given by \eqref{def_stheta}, and there exists $\Delta_2 >0$ such that   for all  $0<\delta <\Delta_2$, we have that  $\gamma>s_{\delta , \theta} $.
		Moreover,  there exists a cut set $\mathcal{M}_\delta$ such that  $\delta^{\frac{1}{\theta}} < |J_{\mathbf{u}^*}| $  and $ |J_\mathbf{u}|\le \delta$  for every $\mathbf{u} \in \mathcal{M}_\delta $ and satisfying
\begin{equation}\label{eq_cvst}
		\sum_{\mathbf{u}\in \mathcal{M}_\delta} |J_\mathbf{u}|^{s_{\delta , \theta}} =1 .
\end{equation}
It implies  that $\sum_{\mathbf{u}\in \mathcal{M}_\delta} |J_\mathbf{u}|^{\gamma} <1 .$
	
	Let  $\Delta=\min\{\Delta_1 , \Delta_2\}$. For all  $\delta<\Delta$, let $\mathcal{M}_\delta$ be the  cut set given by \eqref{eq_cvst}. By lemma \ref{delta cover},  there exists a cover $\mathcal{F}_\delta=\{U_\bu: \bu\in \mathcal{M}_\delta\}$ of $E$ such that $J_\mathbf{u} \subset U_\mathbf{u}$, $\delta^{\frac{1}{\theta}}\leq |U_\mathbf{u}|\le\delta$ and $|J_\mathbf{u}|\leq |U_\mathbf{u}|<|J_{\mathbf{u}^*}|$ for all $\bu\in \mathcal{M}_\delta$.
Combining with \eqref{ineq_dbg<e} and \eqref{eq_cvst}, we have that 
     \begin{eqnarray*}
			\sum_{U\in \mathcal{F}_\delta} | U |^{\beta}&\leq&\sum_{\mathbf{u}\in \mathcal{M}_\delta} | U_\mathbf{u} |^{\gamma}\delta^{\beta-\gamma} \\
			&\leq &\sum_{\mathbf{u}\in \mathcal{M}_\delta} | J_{\mathbf{u}^*} |^{\gamma}\delta^{\beta-\gamma} \\
			&\leq &\frac{\sum_{\mathbf{u}\in \mathcal{M}_\delta} | J_{\mathbf{u}} |^{\gamma}}{c_*^\gamma}\delta^{\beta-\gamma}  \\
			&\leq &\frac{\delta^{\beta-\gamma}}{c_*^\beta} \\
            & <& \epsilon.
		\end{eqnarray*}
This implies that $\uid E \le \beta$.  Since $\beta \geq s^\theta$ is arbitrarily chosen, we obtain that
		$$
		\uid E \le s^\theta.
		$$
		
		Next, we prove that  $\uid E\geq s^\theta$.  Arbitrarily choosing $\alpha<s^\theta$, recall that
		$$
		s^\theta = \limsup_{\delta \to 0}s_{\delta , \theta},
		$$
		and there exists a sequence $\{\delta_k\}_{k=1}^\infty$ convergent to $0$ such that  $s_{\delta_k, \theta}>\alpha$.

		Fix an integer $k>0$.  For each cover $\mathcal{F}$ of $E$ such that  $ {\delta^\frac{1}{\theta}_k}\le \lvert U\rvert \le \delta_k$ for all $U\in \mathcal{F}$, by Lemma \ref{finite intersection}, there exists  a constant $C$ such that for  every  $U\in \mathcal{F}$ such that  $E\cap U\ne \emptyset$,  we have $\#A(U)\le C$. This implies that
\begin{equation}\label{eq_bKb}
		\sum_{ U\in\mathcal{F}} \sum_{\mathbf{u}\in A(U)} \lvert  J_\mathbf{u}\rvert ^\alpha \le C\sum_{U\in\mathcal{F}} \lvert U\rvert^\alpha.
\end{equation}
By \eqref{def_Aset}, we write that
$$
\Sigma(\mathcal{F}) =\bigcup_ {U\in\mathcal{F}} A(U),
$$
		it is obvious that $\{J_\mathbf{u}:\mathbf{u}\in \Sigma(\mathcal{F})\}$ is a cover of $E$ with  $\delta_k^\frac{1}{\theta} < \lvert  J_{\mathbf{u}^*}\rvert$. Hence we may choose a finite cut set $\{\mathbf{u}_i\}_{i = 1}^{n}\subset\Sigma(\mathcal{F})$,  and  there exists $t\ge s_{\delta_k, \theta}$ such that
\begin{equation}\label{eq_t=1}
\sum_{i = 1}^{n} \lvert J_{\mathbf{u}_i}\rvert^{t} =1.
\end{equation}
Since $t\ge s_{\delta_k, \theta}>\alpha$, it follows that 
$$
\sum_{i = 1}^{n} |J_{\mathbf{u}_i}|^{t} \le  \sum_{i = 1}^{n} \lvert  J_{\mathbf{u}_i}\rvert^\alpha   \le  \sum_{ U\in\mathcal{F}} \sum_{\mathbf{u}\in A(U)} \lvert  J_\mathbf{u}\rvert ^\alpha ,
$$
and combining it with \eqref{eq_bKb} and \eqref{eq_t=1}, we obtain that
$$	
 \sum_{U\in\mathcal{F}} \lvert U\rvert^\alpha\geq \frac{1}{C}.
$$
Setting  $\epsilon_0=\frac{1}{C}$, for every $\Delta>0$,  there exists $\delta_k<\Delta$ such that for every cover $\mathcal{F}$  satisfying that  $ {\delta^\frac{1}{\theta}_k}\le \lvert U\rvert \le \delta_k$ for all $U\in \mathcal{F}$,    we have that
$$
		\sum_{U\in\mathcal{F}} \lvert U\rvert^\alpha \geq \epsilon_0.
$$
It follows that
		$$
		\uid E \geq \alpha.
		$$
		Since $\alpha<s^\theta $ is arbitrarily chosen, 		we have that $ \uid E \geq s^\theta$.
		
The proof 	for the lower intermediate dimension is almost identical to  the upper intermediate dimension, and we leave it to the readers as an exercise.	
	\end{proof}

Given a Moran set $E$. Let   $\mathcal{M}$ be a cut set   with  $\#\mathcal{M}<\infty$. We write
	$$
	L_\mathcal{M} =\min\{|\mathbf{u}|: \mathbf{u} \in \mathcal{M}\}, \qquad \qquad
	K_\mathcal{M} =\max\{|\mathbf{u}|: \mathbf{u} \in \mathcal{M}\}.
	$$

	\begin{lem}\label{lem_1}
		Let $E$ be a Moran set given by \eqref{attractor}. Then  for every cut set  $\mathcal{M}$  with  $\#\mathcal{M}<\infty$,  we have
		$$
		\sum_{\mathbf{u}\in \mathcal{M}} \lvert J_\mathbf{u}\rvert ^\beta  > 1,
		$$
		for all   $\beta<\min_{L_\mathcal{M} \le k\le K_\mathcal{M} } s_k$, where $s_k$ is given by \eqref{s1}.
		
	\end{lem}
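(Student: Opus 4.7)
The plan is to reduce the claim to one exact identity,
\[
\sum_{\mathbf{u}\in\mathcal{M}}\frac{|J_\mathbf{u}|^\beta}{\Delta_{0,|\mathbf{u}|}(\beta)}=|J|^\beta,
\]
valid for every finite cut set $\mathcal{M}$, and then observe that the hypothesis $\beta<\min_{L_\mathcal{M}\le k\le K_\mathcal{M}}s_k$ forces each denominator to be strictly greater than $1$, so that dropping the denominators turns the identity into a strict inequality $\sum_{\mathbf{u}\in\mathcal{M}}|J_\mathbf{u}|^\beta>|J|^\beta$. Under the standard normalisation $|J|=1$ (to which we may reduce by scaling, as Moran sets scale linearly), this is exactly the desired bound.

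The monotonicity input is immediate: for each fixed $k$, the map $s\mapsto\Delta_{0,k}(s)$ is a finite product of strictly decreasing positive factors $\sum_{j=1}^{n_i}c_{i,j}^s$ (with each $c_{i,j}\in(0,1)$), so it is itself strictly decreasing. Since $\Delta_{0,k}(s_k)=1$ by definition of $s_k$, the hypothesis $\beta<s_k$ gives $\Delta_{0,k}(\beta)>1$ for every $k$ in the range $L_\mathcal{M}\le k\le K_\mathcal{M}$, which covers every $|\mathbf{u}|$ appearing in $\mathcal{M}$.

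For the identity, set $f(\mathbf{u}):=|J_\mathbf{u}|^\beta/\Delta_{0,|\mathbf{u}|}(\beta)$ and verify the one-step conservation law
\[
\sum_{i=1}^{n_k}f(\mathbf{u}i)=\frac{|J_\mathbf{u}|^\beta\sum_{i=1}^{n_k}c_{k,i}^\beta}{\Delta_{0,k}(\beta)}=\frac{|J_\mathbf{u}|^\beta}{\Delta_{0,k-1}(\beta)}=f(\mathbf{u})\qquad(\mathbf{u}\in\Sigma^{k-1}),
\]
which is immediate from $|J_{\mathbf{u}i}|=|J_\mathbf{u}|c_{k,i}$ and the recursion $\Delta_{0,k}(\beta)=\Delta_{0,k-1}(\beta)\sum_j c_{k,j}^\beta$. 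Iterating this law along the tree $\Sigma^*$ outward from the root—equivalently, partitioning $\Sigma^{K_\mathcal{M}}$ by the unique ancestor in $\mathcal{M}$ and resumming—gives $\sum_{\mathbf{u}\in\mathcal{M}}f(\mathbf{u})=f(\emptyset)=|J|^\beta$, yielding the identity.

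I do not anticipate any substantive obstacle: the identity is just a rearrangement of the multiplicativity $\Delta_{0,K_\mathcal{M}}=\Delta_{0,|\mathbf{u}|}\cdot\Delta_{|\mathbf{u}|,K_\mathcal{M}}$, and the only minor care needed is to justify the telescoping over an arbitrary finite cut set. That justification is a routine induction on $K_\mathcal{M}-L_\mathcal{M}$: one collapses a maximal-depth sibling bundle into its parent; each such collapse preserves both sides of the identity by the one-step conservation law, and the base case $K_\mathcal{M}=L_\mathcal{M}$ forces $\mathcal{M}=\Sigma^{K_\mathcal{M}}$, where the identity becomes the definition of $\Delta_{0,K_\mathcal{M}}(\beta)$.
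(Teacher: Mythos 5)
Your proof is correct, and it takes a genuinely cleaner route than the paper's. The paper's argument inducts from the bottom level $K_\mathcal{M}$ upward, collapsing sibling bundles one level at a time while carrying along factors $\sum_j c_{k,j}^{s_k}$ and splitting into cases according to whether the accumulated product has reached $1$ yet; the reader must then track how these factors assemble into $\Delta_{0,K}(s_K)=1$ at the end, via the somewhat opaque ``go through the processes at most $K$ times'' step. You sidestep all of that bookkeeping by normalising with the weight $f(\mathbf{u})=|J_\mathbf{u}|^\beta/\Delta_{0,|\mathbf{u}|}(\beta)$: the one-step conservation law $\sum_{i=1}^{n_k} f(\mathbf{u}i)=f(\mathbf{u})$ is an exact identity (no inequality, no case split), so $\sum_{\mathbf{u}\in\mathcal{M}} f(\mathbf{u})=f(\emptyset)$ for every finite cut set follows by routine induction, and the strict inequality then drops out in one line from $\Delta_{0,k}(\beta)>\Delta_{0,k}(s_k)=1$ for each depth $k\in[L_\mathcal{M},K_\mathcal{M}]$ occurring in $\mathcal{M}$, since each factor $\sum_j c_{k,j}^s$ is strictly decreasing. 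In effect you replace the paper's inequality-plus-case-analysis induction with a single exact telescoping identity and a pointwise comparison of denominators. One small presentational point both proofs share: the conclusion as stated requires $|J|=1$ (otherwise the right-hand side should read $|J|^\beta$); you make that normalisation explicit, whereas the paper leaves it tacit in this lemma.
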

	
	\begin{proof}
		Since  $\mathcal{M}$ is a cut set   with  $\#\mathcal{M}<\infty$, we have that
		\begin{equation}\label{MJB}
			\sum_{\mathbf{u}\in \mathcal{M}} \lvert J_\mathbf{u}\rvert ^\beta =\sum_{k=L_\mathcal{M}}^{ K_\mathcal{M}}\sum_{\mathbf{u} \in \mathcal{M}, \mathbf{u} \in \Sigma^k}\lvert J_\mathbf{u}\rvert ^\beta.
		\end{equation}
		Note that for each $\mathbf{u}\in \mathcal{M}$ such that $|\mathbf{u}|= K_\mathcal{M}$, it is clear that $\mathbf{u}^* j \in \mathcal{M}$  for each $1\le j\le n_{K_\mathcal{M}}$. Since  $\beta<\min_{L_\mathcal{M} \le k\le K_\mathcal{M} } s_k$, it immediately follows that
		$$
		\lvert J_{\mathbf{u}^*}\rvert^\beta \sum_{j=1}^ {n_{K_\mathcal{M}}} c_{K_\mathcal{M},j}^{s_{K_\mathcal{M}}} < \lvert J_{\mathbf{u}^*}\rvert^\beta \sum_{j=1}^ {n_{K_\mathcal{M}}} c_{K_\mathcal{M},j}^\beta =\sum_{j=1}^ {n_{K_\mathcal{M}}} \lvert J_{\mathbf{u}^* j}\rvert^\beta .
		$$
		Let $\Lambda =\{ \mathbf{u}\in \mathcal{M} : |\mathbf{u}|= K_\mathcal{M} \}$ and $\Lambda^* =\{ \mathbf{u}^* : \mathbf{u}^*j\in\Lambda  \textit{ for some  } j=1,2,\ldots K_\mathcal{M} \}$ . Since $\beta<s_{K_\mathcal{M}}$, we have
		$$
		\Big(\sum_{\mathbf{u}^*\in \Lambda^*} \lvert J_{\mathbf{u}^*}\rvert^\beta\Big)\Big(\sum_{j=1}^ {n_{K_\mathcal{M}}} c_{K_\mathcal{M},j}^{s_{K_\mathcal{M}}}\Big)< \Big(\sum_{\mathbf{u}^*\in \Lambda^*} \lvert J_{\mathbf{u}^*}\rvert^\beta\Big)\Big(\sum_{j=1}^ {n_{K_\mathcal{M}}} c_{K_\mathcal{M},j}^\beta\Big) =			\sum_{\mathbf{u} \in \Lambda}\lvert J_\mathbf{u}\rvert ^\beta.
		$$

		To show $\sum_{\mathbf{u}\in \mathcal{M}} \lvert J_\mathbf{u}\rvert ^\beta >  1$, we need go through the following process inductively.		Let
		$$
		\Lambda_1 =\{\mathbf{u}: \textit{either } \mathbf{u} j \in \Lambda  \textit{ for some $j=1,\ldots ,  n_{K_\mathcal{M}}$ or } \mathbf{u}\in \mathcal{M} \textit{ such that }   |\mathbf{u}|=K_\mathcal{M}-1\}.
		$$
		If
		$$
		\sum_{j=1}^{ n_{K_\mathcal{M}}} c_{K_\mathcal{M},j}^{s_{K_\mathcal{M}}}\geq 1,
		$$
		then it is clear that
		\begin{eqnarray*}
			\sum_{k=K_\mathcal{M}-1}^{ K_\mathcal{M}}\sum_{\mathbf{v} \in \mathcal{M}\cap \Sigma^k}\lvert J_\mathbf{v}\rvert ^\beta&=&  	\sum_{\mathbf{u}\in\Lambda^*} \sum_{j=1}^{n_{K_\mathcal{M}}}\lvert J_{\mathbf{u}j}\rvert ^\beta   +\sum_{\mathbf{u}\in\Lambda_1 \setminus\Lambda^*} |J_\mathbf{u}|^\beta   \\
			&=& 	\Big(\sum_{\mathbf{u}\in \Lambda^*} \lvert J_{\mathbf{u}}\rvert^\beta\Big)\Big(\sum_{j=1}^{ n_{K_\mathcal{M}}} c_{K_\mathcal{M},j}^\beta \Big) +\sum_{\mathbf{u}\in\Lambda_1 \setminus\Lambda^*} |J_\mathbf{u}|^\beta    \\
			&>& 	\Big(\sum_{\mathbf{u}\in \Lambda^*} \lvert J_{\mathbf{u}}\rvert^\beta\Big)\Big(\sum_{j=1}^{ n_{K_\mathcal{M}}} c_{K_\mathcal{M},j}^{s_{K_{\mathcal{M}}}} \Big) +\sum_{\mathbf{u}\in\Lambda_1 \setminus\Lambda^*} |J_\mathbf{u}|^\beta     \\
			&\geq& 	\sum_{\mathbf{u}\in\Lambda_1} \lvert J_{\mathbf{u}}\rvert^\beta.
		\end{eqnarray*}
		We write $\mathcal{M}'= \{\mathbf{u}:|\mathbf{u}|<K_\mathcal{M} \textit{ and } \mathbf{u}\in \mathcal{M}\cup\Lambda_1\}$, and by \eqref{MJB}, it follows that
		$$
		\sum_{\mathbf{u}\in \mathcal{M}} \lvert J_\mathbf{u}\rvert ^\beta>		\sum_{\mathbf{u}\in \mathcal{M}'} \lvert J_\mathbf{u}\rvert ^\beta,
		$$
		We replace  $\mathcal{M}$ by $\mathcal{M}'$ and  repeat above process.
		Otherwise if
		$$
		\sum_{j=1}^ {n_{K_\mathcal{M}}} c_{K_\mathcal{M},j}^{s_{K_\mathcal{M}}}< 1,
		$$
		then we add descendants for every elements $\mathbf{u}\in \mathcal{M} \cap \Sigma^{K_\mathcal{M} -1} $ and have that
		$$
		\sum_{k=K_\mathcal{M}-1}^{ K_\mathcal{M}}\sum_{\mathbf{u} \in \mathcal{M}\cap \Sigma^k}\lvert J_\mathbf{u}\rvert ^\beta \geq \sum_{\mathbf{u}\in\Lambda_1} \lvert J_{\mathbf{u}}\rvert^\beta   \sum_{j=1}^ {n_{K_\mathcal{M}}} c_{K_\mathcal{M},j}^{s_{K_\mathcal{M}}}.
		$$
		For $\mathbf{u}\in\Lambda_1$ , we have
		$$
		\lvert J_{\mathbf{u}^*}\rvert^\beta \sum_{j=1}^ {n_{K_\mathcal{M}-1}} c_{K_\mathcal{M}-1,j}^{s_{K_\mathcal{M}-1}} < \lvert J_{\mathbf{u}^*}\rvert^\beta \sum_{j=1}^ {n_{K_\mathcal{M}-1}} c_{K_\mathcal{M}-1, j}^\beta =\sum_{j=1}^ {n_{K_\mathcal{M}-1}} \lvert J_{\mathbf{u}^* j}\rvert^\beta .
		$$
		Let $\Lambda_1^* =\{ \mathbf{u}^* : \mathbf{u}^*j\in\Lambda _1 \textit{ for some  } j=1,2,\ldots , K_\mathcal{M}-1 \}$.  Since $\beta<s_{K_\mathcal{M}-1}$, it follows that
		\begin{eqnarray*}
			\Big(\sum_{\mathbf{u}^*\in\Lambda_1^*} \lvert J_{\mathbf{u}^*}\rvert^\beta\Big)\Big(\sum_{j=1}^ {n_{K_\mathcal{M}}} c_{K_\mathcal{M},j}^{s_{K_\mathcal{M}}}\Big)\Big(\sum_{j=1}^{ n_{K_\mathcal{M} -1}} c_{K_\mathcal{M}-1,j}^{s_{K_\mathcal{M}-1}}\Big) &<& \Big(\sum_{\mathbf{u} \in \Lambda_1} \lvert J_\mathbf{u}\rvert ^\beta\Big)\Big(\sum_{j=1}^ {n_{K_\mathcal{M}}} c_{K_\mathcal{M},j}^{s_{K_\mathcal{M}}}\Big)  \\
			&<&\sum_{K_\mathcal{M}-1\le k\le K_\mathcal{M}}\sum_{\mathbf{u} \in \mathcal{M}\cap \Sigma^k}\lvert J_\mathbf{u}\rvert ^\beta.
		\end{eqnarray*}
		Let
		$$
		\Lambda_2 =\{\mathbf{u}: \textit{either } \mathbf{u} j \in \Lambda_1  \textit{ for some $j=1,\ldots  , n_{K_\mathcal{M}-1}$ or } \mathbf{u}\in \mathcal{M} \textit{ such that }   |\mathbf{u}|=K_\mathcal{M}-2\}.
		$$
		If
		$$
		1\le \sum_{j=1}^ {n_{K_\mathcal{M}}} c_{K_\mathcal{M},j}^{s_{K_\mathcal{M}}}\sum_{j=1}^{ n_{K_\mathcal{M} -1}} c_{K_\mathcal{M}-1,j}^{s_{K_\mathcal{M}}},
		$$
 we similarly have that
		$$
		\sum_{k=K_\mathcal{M}-2}^{ K_\mathcal{M}}\sum_{\mathbf{u} \in \mathcal{M} \cap \Sigma^k}\lvert J_\mathbf{u}\rvert ^\beta  > \sum_{\mathbf{u}\in\Lambda_2} \lvert J_{\mathbf{u}}\rvert^\beta.
		$$
		Otherwise we  continue the same process as the previous discussion. Since $K_\mathcal{M}-L_\mathcal{M}<\infty,$ we  go through the processes at most $K$ times,  where $L_\mathcal{M}\le K \le L_\mathcal{M}$. This implies that
		$$
		\sum_{\mathbf{u}\in \mathcal{M}} \lvert J_\mathbf{u}\rvert ^\beta> \prod_{k=1}^{K} \sum_{j=1}^{n_k} c_{k,j}^{s_{K}}=1,
		$$
and the conclusion follows.
	\end{proof}
	
Recall that for homogenous Moran sets,  the $s_k$ given by \eqref{s1} is simplified  into
\begin{equation} \label{def_hmssk}
	s_k =-\frac{\log n_1 \dots n_k}{\log c_1 \dots c_k}.
\end{equation}
To find the intermediate dimensions of Homogenous Moran sets, we need to show the distance of $s_k$ and $s_{k+1}$ is sufficiently small. 	
	\begin{lem}\label{continue}
		Let $E$ be a homogeneous Moran set.  If $c_*>0$, then
		$$
		\lim_{k\to\infty} (s_k -s_{k+1}) =0,
		$$
		where $s_k$ is given by \eqref{s1}.
	\end{lem}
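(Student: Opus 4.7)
The plan is to work directly from the explicit formula recorded in \eqref{def_hmssk}, writing $a_k := \log(n_1\cdots n_k)$ and $b_k := -\log(c_1\cdots c_k)$ so that $s_k = a_k/b_k$. The hypothesis $c_*>0$ will force the one-step increments $\log n_{k+1}$ and $-\log c_{k+1}$ to be uniformly bounded in $k$, while the denominator $b_k$ grows at least linearly, which is exactly what is needed for successive ratios to converge.

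First I would record the uniform bounds on the increments. Setting $M := -\log c_* < \infty$ gives $-\log c_k \le M$ for every $k$. The Moran structure condition $n_k c_k^d \le 1$ together with $n_k \ge 2$ forces $c_k \le 2^{-1/d}$, hence $-\log c_k \ge (\log 2)/d$ uniformly; taking logarithms of $n_k c_k^d \le 1$ also yields $\log n_k \le d(-\log c_k) \le dM$. Consequently $b_k \to \infty$ at linear speed and $0\le s_k = a_k/b_k \le d$ for every $k$.

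Next I would perform the direct telescoping computation
\[
s_{k+1} - s_k \;=\; \frac{(\log n_{k+1})\, b_k - a_k\,(-\log c_{k+1})}{b_{k+1}\, b_k} \;=\; \frac{\log n_{k+1}}{b_{k+1}} \;-\; s_k\cdot \frac{-\log c_{k+1}}{b_{k+1}}.
\]
Inserting the bounds $\log n_{k+1} \le dM$, $-\log c_{k+1}\le M$ and $s_k\le d$ gives $|s_{k+1}-s_k| \le 2dM/b_{k+1}$, and since $b_{k+1}\to \infty$ this tends to $0$.

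I do not anticipate a serious obstacle; the entire argument is a one-line manipulation once $s_k=a_k/b_k$ is written out. The only subtle point is why the hypothesis $c_*>0$ cannot be dropped: without it, the increment $-\log c_{k+1}$ could be comparable in size to the partial sum $b_k$, in which case $s_{k+1}-s_k$ need not vanish. This is precisely the regime that Corollary~\ref{thm_4} controls by the weaker assumption $\log c_k/\log(c_1\cdots c_k)\to 0$, which forces the same conclusion by an analogous but more delicate estimate.
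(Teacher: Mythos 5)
Your argument is correct and takes essentially the same route as the paper: both write $s_k$ explicitly, derive $n_k c_k^d\le 1$ and $s_k\le d$, and then bound $|s_{k+1}-s_k|$ by terms of the form $\log n_{k+1}/|\log(c_1\cdots c_{k+1})|$ and $d\,|\log c_{k+1}|/|\log(c_1\cdots c_{k+1})|$, which vanish because $c_*>0$ keeps the numerators bounded while the denominator grows. Your version just spells out the quantitative bounds (linear growth of $b_k$, $\log n_k\le dM$) a bit more explicitly than the paper does.
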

	\begin{proof}
		Since for every $\mathbf{u}\in \Sigma^k$,
		$$
		\bigcup_{i=1}^{n_{k}}J_{\mathbf{u}i}\subset J_\mathbf{u},
		$$
		where $\mbox{int} (J_{\mathbf{u}i})\cap \mbox{int}(J_{\mathbf{u}i^{\prime }})=\varnothing$,   it follows that
		\begin{eqnarray*}
			\mathcal{L}^d\Big(\mbox{int}\Big(\bigcup_{i=1}^{n_{k}}J_{\mathbf{u}u_i}\Big)\Big) &=& \mathcal{L}^d(\mbox{int}(J_\mathbf{u})) n_{k}{c_k}^d  \\
			&\le& \mathcal{L}^d(\mbox{int} (J_\mathbf{u})).
		\end{eqnarray*}
		This implies that  $s_k \le d$ and { $n_k {c_k}^d \le1$}, and we have that
		\begin{eqnarray*}
			|s_k -s_{k+1} |&=&\Big|-\frac{\log n_1 \dots n_k}{\log c_1 \dots c_k}+\frac{\log n_1 \dots n_{k+1}}{\log c_1 \dots c_{k+1}}  \Big|\\
			&=& \Big| \frac{\log n_{k+1}}{\log c_1 \dots c_{k+1}}-\frac{\log n_1 \dots n_k \log c_{k+1}}{\log c_1 \dots c_{k+1} \log c_1 \dots c_k}  \Big| \\
			&\le&  \Big|\frac{\log n_{k+1}}{\log c_1 \dots c_{k+1}}\Big|+d\Big|\frac{\log c_{k+1}}{\log c_1 \dots c_{k+1}} \Big|.
		\end{eqnarray*}
		Let $k$ tend to $\infty$, and  we have that
		$$
		\lim_{k\to\infty} (s_k -s_{k+1}) =0.
		$$
	\end{proof}

	For a homogeneous Moran set $E$, recall that $l(k,\theta)$ is given by
$$  
		c_1c_2\ldots c_l\leq (c_1c_2\ldots c_k)^{\frac{1}{\theta}}<c_1c_2\ldots c_{l-1}.
$$ 	
\begin{prop} \label{Prop_HMS}
		Let $E$ be a homogeneous Moran set with $c_*>0$. Then we have
		$$
		s^{\theta}=\limsup_{k\to\infty} \min_{k\leq m\leq l(k,\theta)} s_m,
		$$
where $s_m$ is given by \eqref{def_hmssk}.
	\end{prop}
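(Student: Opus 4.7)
My plan is to compute $s_{\delta,\theta}$ exactly along the subsequence of scales $\delta_k := c_1 c_2 \cdots c_k |J|$ and then pass to the $\limsup$ over all $\delta\to 0$. Normalize $|J|=1$ (this does not affect the scale-invariant quantity $s^\theta$). The starting point is to parameterize admissible cut sets: at scale $\delta_k$, a cut set $\mathcal{M}$ satisfies the constraints of \eqref{def_stheta} if and only if every $\mathbf{u}\in\mathcal{M}$ has $|\mathbf{u}|\in[k,l(k,\theta)]$. Indeed, $|J_\mathbf{u}|=c_1\cdots c_{|\mathbf{u}|}\leq \delta_k$ is equivalent to $|\mathbf{u}|\geq k$, and $|J_{\mathbf{u}^*}|>\delta_k^{1/\theta}$ rearranges to $c_1\cdots c_{|\mathbf{u}|-1}>(c_1\cdots c_k)^{1/\theta}$, which by \eqref{def_lk} is equivalent to $|\mathbf{u}|\leq l(k,\theta)$.

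For the lower bound at scale $\delta_k$, I apply Lemma \ref{lem_1} to any admissible $\mathcal{M}$: since $L_\mathcal{M}\geq k$ and $K_\mathcal{M}\leq l(k,\theta)$, one has $\sum_{\mathbf{u}\in\mathcal{M}}|J_\mathbf{u}|^\beta>1$ for any $\beta<\min_{k\leq m\leq l(k,\theta)} s_m$, which forces the $s$ solving $\sum|J_\mathbf{u}|^s=1$ to satisfy $s\geq \min_{k\leq m\leq l(k,\theta)} s_m$. For the matching upper bound, I take $m^*\in[k,l(k,\theta)]$ realizing the minimum of $s_m$ and use the admissible cut set $\mathcal{M}=\Sigma^{m^*}$; the equation $\sum_{\mathbf{u}\in\Sigma^{m^*}}|J_\mathbf{u}|^s=n_1\cdots n_{m^*}(c_1\cdots c_{m^*})^s=1$ solves to $s=s_{m^*}$ via \eqref{def_hmssk}. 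Combining the two bounds, $s_{\delta_k,\theta}=\min_{k\leq m\leq l(k,\theta)} s_m$ for every $k$.

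To pass from the subsequence $\{\delta_k\}$ to all $\delta\to 0$, I argue that for $\delta\in(\delta_{k+1},\delta_k]$ the admissible length range of cut sets differs from $[k,l(k,\theta)]$ by a bounded number of levels, uniformly in $k$; this uses $c_*>0$ to bound $-\log c_m$ from above, which controls how much $l(\delta,\theta)$ can drift as $\delta$ varies within the interval. By Lemma \ref{continue}, $|s_m-s_{m+1}|\to 0$, so the bounded shift in the range of minimization contributes at most $o(1)$ to $\min s_m$. Therefore $s^\theta=\limsup_{\delta\to 0} s_{\delta,\theta}=\limsup_{k\to\infty} \min_{k\leq m\leq l(k,\theta)} s_m$, as claimed.

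The main obstacle lies in this last step: since $\delta\mapsto s_{\delta,\theta}$ is in general neither monotone nor continuous, one must rule out the possibility that the $\limsup$ over intermediate $\delta$ exceeds the $\limsup$ over the subsequence $\{\delta_k\}$. This is precisely where the asymptotic continuity $|s_m-s_{m+1}|\to 0$ from Lemma \ref{continue}, together with the hypothesis $c_*>0$, becomes essential; without it the intermediate scales could produce spurious cluster points.
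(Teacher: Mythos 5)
Your proposal is correct and follows essentially the same route as the paper: you identify $s_{\delta,\theta}$ as $\min_{m} s_m$ over the admissible level range (lower bound via Lemma \ref{lem_1}, upper bound via the explicit cut set $\Sigma^{m^*}$ at the minimizing level), and then use the asymptotic continuity $|s_k-s_{k+1}|\to 0$ from Lemma \ref{continue} together with $c_*>0$ to transfer the $\limsup$ from general $\delta$ to the subsequence $\delta_k=c_1\cdots c_k$. The paper presents the last step by sandwiching $l(\delta)$ between $l(k-1,\theta)$ and $l(k,\theta)$ for $k=k(\delta)$ rather than by bounding the level drift across each interval $(\delta_{k+1},\delta_k]$, but this is only a cosmetic difference; your version implicitly also uses that each $c_m$ is bounded away from $1$ (which holds automatically since $n_k\geq 2$) to make the drift bound uniform.
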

	
	\begin{proof}
		Without loss of generality, we assume that $|J|=1$. For every $\delta<c_1$,  there exist intergers $k(\delta)$ and $l(\delta)$ such that
		$$
		c_1c_2\ldots c_{k(\delta)}=|J_\mathbf{u}|\le \delta <|J_{\mathbf{u}^*}|=c_1c_2\ldots c_{k(\delta)-1},
		$$
for all  $\mathbf{u}\in\Sigma^{k(\delta)}$	and
		$$
		c_1c_2\ldots c_{l(\delta)}=|J_\mathbf{v}|\le\delta^{\frac{1}{\theta}} < |J_{\mathbf{v}^*}|=c_1c_2\ldots c_{l(\delta)-1},
		$$
for all $\mathbf{v}\in\Sigma^{l(\delta)}.$	

By the definitions of $s_k$ and $s_{\delta, \theta}$,  we have
		$$
		\min_{k(\delta)\le m\le l(\delta)} s_m\geq s_{\delta, \theta}.
		$$
		If $\min_{k(\delta)\le m\le l(\delta)} s_m> s_{\delta, \theta}$, then there exists a cut-set $\mathcal{M}$ satisfying
		$$
		\sum_{\mathbf{u}\in\mathcal{M}} |J_\mathbf{u}|^{s_{\delta, \theta}}=1 ,
		$$
where $ k(\delta)\le|\mathbf{u}|\le l(\delta)$ for all $\mathbf{u}\in\mathcal{M}$.  This contradicts  Lemma \ref{lem_1}, and we have
		$$
		\min_{k(\delta)\le m\le l(\delta)} s_m = s_{\delta, \theta}.
		$$

For each integer $k>0$, recall that   $l(k,\theta)$ is given by
$$
c_1c_2\ldots c_{l(k,\theta)}\leq (c_1c_2\ldots c_k)^{\frac{1}{\theta}}<c_1c_2\ldots c_{l(k,\theta)-1}.
$$ 
Let $k=k(\delta)$. Since
$$
(c_1c_2\ldots c_k)^{\frac{1}{\theta}} \le\delta^\frac{1}{\theta}  \le (c_1c_2\ldots c_{k-1})^{\frac{1}{\theta}},
$$
it implies that
$$
l(k,\theta)\ge l(\delta)>l(k-1, \theta).
$$
 Hence we have
\begin{equation}\label{sd=sm}
\min_{k\leq m\leq l(k,\theta)} s_m\le \min_{k\le m\le l(\delta)} s_m = s_{\delta, \theta}<\min_{k\leq m\leq l(k-1,\theta)} s_m.
\end{equation}

Since
$$
0\le \min_{k+1\leq m\leq l(k,\theta)} s_m -\min_{k\leq m\leq l(k,\theta)} s_m  \le|s_{k+1}-s_k|,
$$
by Lemma \ref{continue},  we have
$$
\lim_{k\to\infty} \big(\min_{k+1\leq m\leq l(k,\theta)} s_m -\min_{k\leq m\leq l(k,\theta)} s_m \big)=0.
$$
It is follows that
		$$
		\limsup_{k\to\infty} \min_{k+1\leq m\leq l(k,\theta)} s_m=\limsup_{k\to\infty} \min_{k\leq m\leq l(k,\theta)}s_m.
		$$
Therefore, by \eqref{sd=sm}, we have
		$$
		s^\theta=\limsup_{\delta\to0} s_{\delta , \theta}=\limsup_{k\to\infty} \min_{k\leq m\leq l(k,\theta)} s_m,
		$$
and the conclusion holds.
	\end{proof}
	
	\begin{proof}[Proof of Corollary \ref{cor_HMS}]
		The conclusion follows directly  from Theorem \ref{thm_1} and Proposition \ref{Prop_HMS}.
	\end{proof}
	
	\section{Intermediate dimension of Moran sets with $c_*=0$}
	In the section, we study the intermediate dimensions of Moran sets $E$ with $c_*=0$. Given a set $F\subset \R^d$ such that $E\cap F\ne \emptyset$,  recall that
	$$
	A(F)=\{\mathbf{u}:\mathbf{u}\in \mathcal{M}(|F|) , J_\mathbf{u}\cap F \ne \emptyset\},
	$$
	where  $\mathcal{M}(|F|) =\{\mathbf{u}\in\Sigma^*: |J_\mathbf{u}|\le |F|<|J_{\mathbf{u}^*}| \}$.	Since $c_*=0$, the number of elements in $A(F)$ is not necessarily bounded with respect to $F$, which is important in the dimension estimation. To overcome this obstacle, we have to further classify the set $A(F)$. Let
\begin{equation}\label{def_k0}
k_0 = \min\{k:|\mathbf{u}|=k , \mathbf{u}\in A(F)\},
\end{equation}
and for each integer $k\geq k_0$, we write
	$$D(F, k)=\{\mathbf{u} \in \Sigma^k :\mathbf{u} \in A(F)\}.
	$$

	In the following conclusion, we show that the number of element in $D(F, k)$ does not increase very fast under certain restrictions on $\underline{c}_k$ and $M_k$ where
	$$
	\underline{c}_k=\min_{1\le j \le n_k} \{c_{k,j}\}, \quad  \textit{and }\quad M_k=\max_{\mathbf{u} \in \Sigma^k} \lvert J_\mathbf{u} \rvert,
	$$
\begin{lem}\label{finite intersection c_*=0}
		Given a Moran set  $E$  with $c_*=0$. Suppose that
		$$
		\lim_{k\to +\infty} \frac{\log{\underline{c}_k}}{\log{M_k}}=0.
		$$
		Then there exists  a constant $C$ such that for  every  $F \subset \mathbb{R}^d$ with  $E\cap F\ne \emptyset$, we have
		$$
		\sum_{k = k_0}^{\infty}\underline{c}_k^d \# D(F, k)\le C,
		$$
where $k_0$ is given by \eqref{def_k0}.
	\end{lem}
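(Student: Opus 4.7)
The plan is to run a volume-comparison argument modelled on Lemma \ref{finite intersection}, but where the packing is stratified by word length rather than carried out at a single scale. Fix any $x_0 \in F$. Since $|J_\mathbf{u}| \le |F|$ and $J_\mathbf{u} \cap F \ne \emptyset$ for every $\mathbf{u} \in A(F)$, the triangle inequality places $J_\mathbf{u} \subset B(x_0, 2|F|)$. Because $A(F) \subset \mathcal{M}(|F|)$ lies inside a cut set, iterating MSC (2) along the coding tree shows that the whole family $\{J_\mathbf{u} : \mathbf{u} \in A(F)\}$, even with word lengths of different sizes, consists of sets with pairwise disjoint interiors.

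The key analytic ingredient is a level-uniform lower bound on $|J_\mathbf{u}|$. For $\mathbf{u} \in D(F,k)$, writing $|J_\mathbf{u}| = c_{k,u_k}|J_{\mathbf{u}^*}|$ and combining $c_{k,u_k} \ge \underline{c}_k$ with $|J_{\mathbf{u}^*}| > |F|$ gives $|J_\mathbf{u}| > \underline{c}_k |F|$. Since $J_\mathbf{u} = \Psi_\mathbf{u}(J)$ is a similar copy of $J$ of diameter $|J_\mathbf{u}|$, this yields
\begin{equation*}
\mathcal{L}^d(\mathrm{int}(J_\mathbf{u})) \;=\; \frac{\mathcal{L}^d(\mathrm{int}(J))}{|J|^d}\,|J_\mathbf{u}|^d \;\geq\; \frac{\mathcal{L}^d(\mathrm{int}(J))}{|J|^d}\,\underline{c}_k^d\,|F|^d.
\end{equation*}
Summing over $\mathbf{u} \in D(F,k)$, then over $k \ge k_0$, and combining the disjointness with the containment $\bigcup_{\mathbf{u}\in A(F)}\mathrm{int}(J_\mathbf{u}) \subset B(x_0, 2|F|)$, one obtains
\begin{equation*}
\frac{\mathcal{L}^d(\mathrm{int}(J))}{|J|^d}\,|F|^d\sum_{k \ge k_0}\underline{c}_k^d\,\#D(F,k) \;\leq\; \mathcal{L}^d\bigl(B(x_0,2|F|)\bigr) \;=\; 2^d\,\mathcal{L}^d(B(0,1))\,|F|^d.
\end{equation*}
Cancelling the common factor $|F|^d$ leaves the inequality with $C = 2^d \mathcal{L}^d(B(0,1))\,|J|^d / \mathcal{L}^d(\mathrm{int}(J))$.

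The main obstacle, though structural rather than analytic, is to justify that the interior-disjointness of the family $\{J_\mathbf{u} : \mathbf{u} \in A(F)\}$ persists across different word lengths. This uses precisely the cut-set condition: any two distinct $\mathbf{u}, \mathbf{v} \in A(F)$ are $\prec$-incomparable, so they branch at a common ancestor, at which level the MSC sibling-separation applies; feeding this into a standard descending nesting argument gives global interior-disjointness. With that observation in place, the rest of the argument is purely measure arithmetic, and the specific role of the standing hypothesis $\lim_{k\to\infty} \log\underline{c}_k/\log M_k = 0$ is not felt inside this lemma itself (it enters later when the $k$-indexed bound is fed into the covering estimates leading to Theorem \ref{thm_3}).
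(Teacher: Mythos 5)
Your proof is correct and follows the same volume-comparison argument as the paper: lower-bound $|J_\mathbf{u}|$ by $\underline{c}_k|F|$ for $\mathbf{u}\in D(F,k)$, sum the volumes of the pairwise-disjoint interiors stratified by level, and compare against $\mathcal{L}^d(B(x_0,2|F|))$ before cancelling $|F|^d$. You are in fact slightly more careful than the paper in two spots: the paper implicitly normalizes $|J|=1$ (so the $|J|^d$ factor you retain in $C$ disappears there), and you correctly note that the standing hypothesis $\lim_k \log\underline{c}_k/\log M_k=0$ plays no role inside this lemma itself.
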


	\begin{proof}
		Given a set $F \subset \mathbb{R}^d$ such that   $E\cap F\ne \emptyset$. For every $\bu\in \mathcal{M}(|F|) $, it is clear that
		$$
		\underline{c}_{|\mathbf{u}|}\lvert F\rvert \le \underline{c}_{|\mathbf{u}|}\lvert J_{\mathbf{u}^*} \rvert \le \lvert J_{\mathbf{u}} \rvert,
		$$
		Arbitrarily choose $x\in F$,  and  we have that   $J_\mathbf{u} \subset B(x,2\lvert F\rvert)$ for every $ J_\mathbf{u} \in A(F)$. It immediately follows that
		\begin{eqnarray*}
			\sum_{k = k_0}^{\infty}\underline{c}_k^d\# D(F, k)\lvert F\rvert ^d \mathcal{L}^d((\mbox{int}J))&\le& \mathcal{L}^d(\mbox{int}(J))\sum_{k = k_0}^{\infty}\sum_{\mathbf{u} \in D(F, k)} \lvert J_\mathbf{u}\rvert ^d   \\
			&=&\mathcal{L}^d(\mbox{int}(J)) \sum_{\mathbf{u} \in A(F)}\lvert J_\mathbf{u}\rvert ^d  \\
			&\le& \mathcal{L}^d(B(x,2\lvert F\rvert).
		\end{eqnarray*}
		Hence we obtain that
		$$
		\sum_{k = k_0}^{\infty}\underline{c}_k^d \# D(F, k)\le \frac{2^d \mathcal{L}^d(B(0,1))}{ \mathcal{L}^d((\mbox{int}J))},
		$$
		and the conclusion holds by setting   $C=\frac{2^d \mathcal{L}^d(B(0,1))}{ \mathcal{L}^d((\mbox{int}J))}$.
	\end{proof}
	\begin{proof}[Proof of Theorem \ref{thm_3}]
We only given the proof for the lower intermediate dimension since the proof for upper intermediate dimension is similar. 	For the lower intermediate dimension, it is equivalent to show that  $\lid E\le s_\theta$ and $\lid E\ge s_\theta$.
		
		First, we prove $\lid E\geq s_\theta$. Arbitrarily choose $\alpha<s_\theta$. Since  $s_\theta =\liminf_{\delta \rightarrow 0 } s_{\delta , \theta}$, there exists $\Delta_1 >0$  such that   for all  $0<\delta <\Delta_1$, we have that  $\alpha<s_{\delta , \theta} $.  Since $\lim_{k\to +\infty} \frac{\log{\underline{c}_k}}{\log{M_k}}=0$, for each $\eta>0$, there exists $K_0>0$, such that  when $k>K_0$, we have
\begin{equation}\label{Mk<ck1}
		M_k^\eta<\underline{c}_k^d,
\end{equation}
and there exists $\Delta_2$ such that for all $0<\delta<\Delta_2$,  we have $|\mathbf{u}|>K_0$ for all $\bu\in\Sigma^*$ satisfying $|J_\mathbf{u}|\le\delta$. 

Given a  cover  $\mathcal{F}$ of $E$ such that  $ {\delta^\frac{1}{\theta}}\le | U| \le \delta$ for each $U\in \mathcal{F}$.  By Lemma \ref{finite intersection c_*=0},  there exists  a constant $C$ such that for  every  $U\in \mathcal{F} $ with  $E\cap U\ne \emptyset$, we have
\begin{equation}\label{sum<C1}
		\sum_{k = k_0}^{\infty}\underline{c}_k^d \# D(U, k)\le C,
\end{equation}
where $k_0$ is given by \eqref{def_k0}. Since $|J_\bu|\leq |U|$ for every $\mathbf{u} \in D(U, k)$, combining \eqref{Mk<ck1} and \eqref{sum<C1} together,  we have for $k_0 >K_0$
		\begin{eqnarray*}
	\sum_{ U\in\mathcal{F}} \sum_{\mathbf{u}\in A(U)} \lvert  J_\mathbf{u}\rvert ^\alpha  &=&\sum_{U\in\mathcal{F}} \sum_{k = k_0}^{\infty}\sum_{\mathbf{u} \in D(U, k)} |J_\mathbf{u}|^\alpha   \\
			&\le& \sum_{U\in\mathcal{F}} \sum_{k = k_0}^{\infty}\sum_{\mathbf{u} \in D(U, k)}  M_k ^\eta \lvert U\rvert^{\alpha-\eta} \\
			&\le& \sum_{U\in\mathcal{F}} |U|^{\alpha-\eta} \sum_{k = k_0}^{\infty}  \underline{c}_k^d  \# D(U, k)   \\
			&\le& C\sum_{U\in\mathcal{F}} \lvert U\rvert^{\alpha-\eta}.
		\end{eqnarray*}
		
Let $	\mathcal{F_1} =\{J_\mathbf{u}:\mathbf{u}\in A(U), U\in\mathcal{F}\}$ and $\mathcal{F_1}$ is a cover of $E$ satisfying $\delta^\frac{1}{\theta} < | J_{\mathbf{u}^*}|$ and $| J_{\mathbf{u}}|\leq \delta$. Moreover, we may choose a finite cut set
$$
\{{\mathbf{u}_i}\}_{i = 1}^{n}\subset \bigcup_{U\in\mathcal{F}} A(U)
$$
such that  $\{J_{\mathbf{u}_i}\}_{i = 1}^{n}\subset\mathcal{F_1}$ is a cover of $E$, and  there exists $t\ge s_{\delta,\theta}>\alpha$  such that
$$
\sum_{i = 1}^{n} \lvert J_{\mathbf{u}_i}\rvert^{t} =1.
$$
Since  $t\ge s_{\delta,\theta}>\alpha$,  we have that
$$
\sum_{ U\in\mathcal{F}} \sum_{\mathbf{u}\in A(U)} \lvert  J_\mathbf{u}\rvert ^\alpha  \geq \sum_{i = 1}^{n} | J_{\mathbf{u}_i}|^\alpha \geq \sum_{i = 1}^{n} \lvert J_{\mathbf{u}_i}\rvert^{t}=1,
$$		
and it implies that
$$
\sum_{U\in\mathcal{F}} \lvert U\rvert^{\alpha-\eta}\geq \frac{1}{C}.
$$

Setting $\epsilon_0=\frac{1}{C}$ and $\Delta_0=\min\{\Delta_1, \Delta_2\}$,  for every $0<\delta<\Delta_0$  and every cover $\mathcal{F}$ satisfying  $ {\delta^\frac{1}{\theta}}\le | U| \le \delta$ for all $U\in \mathcal{F}$,    we have that
		$$
		\sum_{U\in\mathcal{F}} \lvert U\rvert^{\alpha-\eta} \geq \epsilon_0.
		$$
		It implies that $ \lid E \ge \alpha-\eta$. Since $\eta >0$ and $\alpha < s^\theta$ are  arbitrarily chosen, we obtain that
		$$
		 \lid E \ge s_\theta,
		$$

		Next, we prove $\lid E\leq s_\theta$. Arbitrarily choose $\beta>\gamma>s_\theta$.
Since $\gamma>s_\theta$,  there exists a sequence $\{\delta_k\}_{k=1}^\infty$ convergent to $0$ such that  $\gamma>s_{\delta_k,\theta}$. Moreover,  for each $k>0$, there exists a  cut set $\mathcal{M}_k$ such that $\delta_k^{\frac{1}{\theta}} < |J_{\mathbf{u}^*}|$   and $ |J_\mathbf{u}|\le \delta_k$ for all $\mathbf{u} \in \mathcal{M}_k$  satisfying
\begin{equation}\label{skd=1}
\sum_{\mathbf{u}\in \mathcal{M}_k} |J_\mathbf{u}|^{s_{\delta_k,\theta}} =1 .
\end{equation}
Moreover, by lemma \ref{delta cover},  there exists a cover $\mathcal{F}_k=\{U_\bu: \bu\in \mathcal{M}_k\}$ of $E$ such that $J_\mathbf{u} \subset U_\mathbf{u}$, $\delta_k^{\frac{1}{\theta}}\leq |U_\mathbf{u}|\le\delta_k$ and $|J_\mathbf{u}|\leq |U_\mathbf{u}|<|J_{\mathbf{u}^*}|$ for all $\bu\in \mathcal{M}_k$.

Since $\{\delta_k\}_{k=1}^\infty$ is convergent to $0$, for each $ \epsilon >0$ and  $\Delta>0$, there exists  an integer  $K_1>0$ such that for all $k>K_1$, we have that $\delta_k <\Delta$ and
\begin{equation}\label{de<ep}
\delta_k^{\frac{\beta-\gamma}{2}} <\epsilon.
\end{equation}

Since
$$
\lim_{k\to +\infty} \frac{\log{\underline{c}_k}}{\log{M_k}}=0,
$$
there exists an integer  $K_2>0$, such that  for all  $k>K_2$,
\begin{equation}\label{MC<1}
		\frac{M_k^{\frac{\beta-\gamma}{2}}}{\underline{c}_k^{s_{\delta_k,\theta}+\beta-\gamma}}<\frac{M_k^{\frac{\beta-\gamma}{2}}}{\underline{c}_k^{\beta}}<1.
\end{equation}

Hence for all $\epsilon>0$ and $\Delta>0$, choose $k>\max\{K_1,K_2\}$,  and $\mathcal{F}_k=\{U_\bu: \bu\in \mathcal{M}_k\}$ is a cover  of $E$ satisfying that $J_\mathbf{u} \subset U_\mathbf{u}$, $\delta_k^{\frac{1}{\theta}}\leq |U_\mathbf{u}|\le\delta_k$ and $|J_\mathbf{u}|\leq |U_\mathbf{u}|<|J_{\mathbf{u}^*}|$ for all $\bu\in \mathcal{M}_k$.  Since $\beta>\gamma>s_\theta$, by \eqref{de<ep}, \eqref{MC<1} and \eqref{skd=1},  we obtain that
		\begin{eqnarray*}
			\sum_{\mathbf{u}\in \mathcal{M}_k} | U_\mathbf{u} |^{\beta}
			&\leq &\sum_{\mathbf{u}\in \mathcal{M}_k} | J_{\mathbf{u}^*} |^{s_{\delta_k,\theta}+\beta-\gamma} \\
			&\leq &\sum_{\mathbf{u}\in \mathcal{M}_k} \Big(\frac{| J_{\mathbf{u}} |}{\underline{c}_{|\mathbf{u}|}}\Big)^{s_{\delta_k,\theta}+\beta-\gamma} \\
			&\leq &\sum_{\mathbf{u}\in \mathcal{M}_k} | J_{\mathbf{u}} |^{s_{\delta_k,\theta}+\frac{\beta-\gamma}{2}}    \Big(\frac{M_{|\mathbf{u}|}^{\frac{\beta-\gamma}{2}}}{\underline{c}_{|\mathbf{u}|}^{s_{\delta_k,\theta}+\beta-\gamma}}\Big) \\
			&\leq&\delta^\frac{\beta-\gamma}{2} \\
&<&\epsilon
		\end{eqnarray*}
		It follows that $\lid E \le \beta$.  Since $\beta \geq s^\theta$ is arbitrarily chosen, we obtain that
		$$
		\lid E \le s^\theta.
		$$
\end{proof}

Next, we study the intermediate dimension of homogeneous Moran sets with $c_*=0$. The key idea is similar to the case with $c_*>0$, and the following conclusions are  the same as before with extra assumptions. Since the proofs are similar, we  only give the key argument in the  proofs to show the difference. The next result shows the distance between $s_k$ and $s_{k+1}$ tends to $0$. 	
	\begin{lem}\label{continue 0}
		Let $E$ be a homogeneous Moran set with $c_*=0$. Suppose that
		$$
		\lim_{k\to +\infty} \frac{\log{c_k}}{\log{c_1 \dots c_k}}=0.
		$$
		Then
		$$
		\lim_{k\to\infty} s_k -s_{k+1} =0,
		$$
		where $s_k$ is given by \eqref{def_hmssk}.
	\end{lem}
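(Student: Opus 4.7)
The plan is to mirror the proof of Lemma \ref{continue}, which handled the case $c_*>0$. The key ingredient there was the volumetric bound $n_k c_k^d\le 1$, which is an immediate consequence of the MSC: the $n_k$ geometrically similar subsets $J_{\mathbf{u}i}\subset J_\mathbf{u}$, each of relative diameter $c_k$, have pairwise disjoint interiors, so their total $d$-dimensional Lebesgue measure cannot exceed that of $J_\mathbf{u}$. This bound does not use $c_*>0$ at all, so it is still available when $c_*=0$, and it yields, as before, $s_k\le d$, equivalently $\log n_1\cdots n_k\le -d\log c_1\cdots c_k$.

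Starting from the same algebraic identity used in the previous lemma,
\begin{equation*}
|s_k - s_{k+1}| = \Big|\frac{\log n_{k+1}}{\log c_1\cdots c_{k+1}} - \frac{\log n_1\cdots n_k\cdot \log c_{k+1}}{\log c_1\cdots c_{k+1}\cdot \log c_1\cdots c_k}\Big|,
\end{equation*}
the triangle inequality together with $s_k\le d$ gives
\begin{equation*}
|s_k - s_{k+1}| \le \Big|\frac{\log n_{k+1}}{\log c_1\cdots c_{k+1}}\Big| + d\Big|\frac{\log c_{k+1}}{\log c_1\cdots c_{k+1}}\Big| \le 2d\Big|\frac{\log c_{k+1}}{\log c_1\cdots c_{k+1}}\Big|,
\end{equation*}
where the last inequality uses $0\le \log n_{k+1}\le -d\log c_{k+1}$.

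It then remains to send the right-hand side to $0$. In Lemma \ref{continue} this was automatic because $|\log c_{k+1}|\le|\log c_*|$ is bounded while $|\log c_1\cdots c_{k+1}|\to\infty$. That route is blocked when $c_*=0$, and this is precisely the place where the new hypothesis enters: the assumed limit $\lim_{k\to\infty}\frac{\log c_k}{\log c_1\cdots c_k}=0$ is, up to the harmless index shift $k\mapsto k+1$, exactly the quantity whose vanishing we need. Substituting into the bound above yields $\lim_{k\to\infty}(s_k-s_{k+1})=0$.

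The main obstacle is essentially conceptual rather than computational: identifying which step in the $c_*>0$ proof actually used the positivity of $c_*$ (only the final passage to $0$), and recognising that the hypothesis $\frac{\log c_k}{\log c_1\cdots c_k}\to 0$ is the natural replacement that retains control of the ratio $\frac{\log c_{k+1}}{\log c_1\cdots c_{k+1}}$. Once this is clear, the rest of the argument is purely a transcription of the earlier proof with the final limit justified by hypothesis instead of by a uniform lower bound on the contraction ratios.
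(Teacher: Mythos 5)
Your proof is correct and follows essentially the same route as the paper: derive $s_k\le d$ and $n_k c_k^d\le 1$ from the disjoint-interior condition (noting this never used $c_*>0$), bound $|s_k-s_{k+1}|$ by $\bigl|\tfrac{\log n_{k+1}}{\log c_1\cdots c_{k+1}}\bigr|+d\bigl|\tfrac{\log c_{k+1}}{\log c_1\cdots c_{k+1}}\bigr|$, and send this to $0$ using the hypothesis. Your final consolidation of the two terms into $2d\bigl|\tfrac{\log c_{k+1}}{\log c_1\cdots c_{k+1}}\bigr|$ is a minor cosmetic tightening, not a different argument.
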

	
	\begin{proof}
		Since  $s_k \le d$ and $n_k {c_k}^d <1$, we have that
		\begin{eqnarray*}
			\Big|s_k -s_{k+1} \Big|&\le&  \Big|\frac{\log  n_{k+1}}{\log c_1 \dots c_{k+1}}+d\frac{\log c_{k+1}}{\log c_1 \dots c_{k+1}} \Big|.
		\end{eqnarray*}
		The fact $\lim_{k\to +\infty} \frac{\log{c_k}}{\log{c_1 \dots c_k}}=0$ implies that
		$$
		\lim_{k\to\infty} s_k -s_{k+1} =0.
		$$
	\end{proof}

	\begin{prop} \label{Prop_HMS 0}
		Let $E$ be a homogeneous Moran set with $c_*=0$. If
		$$
		\lim_{k\to +\infty} \frac{\log{c_k}}{\log{c_1 \dots c_k}}=0,
		$$
		then we have
		$$
		s^{\theta}=\limsup_{k\to\infty} \min_{k\leq m\leq l(k,\theta)} s_m,
		$$
where $s_k$ is given by \eqref{def_hmssk}.
	\end{prop}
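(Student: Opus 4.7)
The plan is to reproduce the argument of Proposition \ref{Prop_HMS} virtually verbatim, with the only substantive modification being the replacement of Lemma \ref{continue} by Lemma \ref{continue 0}. The role of the hypothesis $c_* > 0$ in the earlier proof was exclusively to secure the continuity $\lim_{k \to \infty}(s_k - s_{k+1}) = 0$; under the present hypothesis $\lim_{k \to \infty} \frac{\log c_k}{\log c_1 \cdots c_k} = 0$ this is supplied instead by Lemma \ref{continue 0}. The other workhorse of the original proof, Lemma \ref{lem_1}, already makes no assumption on $c_*$ and therefore remains available without change.

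More concretely, assuming $|J| = 1$ without loss of generality, for every sufficiently small $\delta > 0$ I would introduce the integers $k(\delta)$ and $l(\delta)$ via
$$c_1 \cdots c_{k(\delta)} \le \delta < c_1 \cdots c_{k(\delta)-1}, \qquad c_1 \cdots c_{l(\delta)} \le \delta^{1/\theta} < c_1 \cdots c_{l(\delta)-1}.$$
For each integer $m$ with $k(\delta) \le m \le l(\delta)$, the full level set $\Sigma^m$ is a valid cut set for the defining constraints in \eqref{def_stheta}, and by homogeneity $\sum_{\mathbf{u} \in \Sigma^m} |J_\mathbf{u}|^{s_m} = 1$. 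This immediately yields $\min_{k(\delta) \le m \le l(\delta)} s_m \ge s_{\delta,\theta}$. The reverse inequality is forced by Lemma \ref{lem_1}: a strict inequality $\min_{k(\delta) \le m \le l(\delta)} s_m > s_{\delta,\theta}$ would produce a finite cut set $\mathcal{M}$ with word lengths in $[k(\delta), l(\delta)]$ and $\sum_{\mathbf{u} \in \mathcal{M}} |J_\mathbf{u}|^{s_{\delta,\theta}} = 1$, contradicting that lemma applied with $\beta = s_{\delta,\theta}$. Hence $\min_{k(\delta) \le m \le l(\delta)} s_m = s_{\delta,\theta}$.

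To pass from the $\delta$-indexed formula to the $k$-indexed formula stated in the proposition, I would apply the definition of $l(k,\theta)$ with $k = k(\delta)$ to obtain $l(k,\theta) \ge l(\delta) \ge l(k-1,\theta)$, yielding
$$\min_{k \le m \le l(k,\theta)} s_m \;\le\; s_{\delta,\theta} \;\le\; \min_{k \le m \le l(k-1,\theta)} s_m.$$
Combining this sandwich with the elementary bound
$$0 \le \min_{k+1 \le m \le l(k,\theta)} s_m - \min_{k \le m \le l(k,\theta)} s_m \le |s_{k+1} - s_k|$$
and invoking Lemma \ref{continue 0} identifies the $\limsup$ in $k$ of the two nested minima, and one concludes $s^\theta = \limsup_{\delta \to 0} s_{\delta,\theta} = \limsup_{k \to \infty} \min_{k \le m \le l(k,\theta)} s_m$.

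The conceptual obstacle compared with the $c_* > 0$ case is that when $c_* = 0$ the consecutive values $s_k, s_{k+1}$ can a priori differ non-trivially because $c_k$ may decay arbitrarily fast; the hypothesis $\lim_k \frac{\log c_k}{\log c_1 \cdots c_k} = 0$ is precisely what annihilates the two error terms in the estimate of $|s_k - s_{k+1}|$ appearing in Lemma \ref{continue 0}. Once that lemma is in hand, the remainder of the argument proceeds word-for-word as in Proposition \ref{Prop_HMS}, so no genuinely new combinatorial or geometric input is required beyond the continuity of the sequence $\{s_k\}$ already packaged in Lemma \ref{continue 0}.
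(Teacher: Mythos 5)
Your proposal is correct and matches the paper exactly: the paper's own ``proof'' of this proposition is literally the sentence ``The proof is the same as Proposition \ref{Prop_HMS}, and we omit it,'' with the understanding that Lemma \ref{continue 0} supplies the continuity $\lim_{k\to\infty}(s_k - s_{k+1}) = 0$ in place of Lemma \ref{continue}, and Lemma \ref{lem_1} carries over unchanged since it imposes no condition on $c_*$. You have simply spelled out the argument the paper leaves implicit, including the sandwich $\min_{k \le m \le l(k,\theta)} s_m \le s_{\delta,\theta} \le \min_{k \le m \le l(k-1,\theta)} s_m$ and the $|s_{k+1}-s_k|$ estimate used to identify the two nested $\limsup$'s.
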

	The proof is  the same as proposition \ref{Prop_HMS}, and we omit it.
	
	\begin{proof}[Proof of Corollary \ref{thm_4}]
		The  conclusion follows directly from  Theorem \ref{thm_3} and Proposition \ref{Prop_HMS 0}.
	\end{proof}

	\section{Visualization and examples of Intermediate dimensions} \label{sec_VEID}
In this section, we first show the visualization of a class of homogeneous Moran sets. Then we give some examples to illustrate our main conclusions.

Given an integer $L\ge 2$, recall that
	$$
	\mathcal{F}_L=\Big \{f(\theta)=\frac{La \theta +b}{Lc \theta +c} : a,b,c\in\mathbb{R}, c>a\ge b>0, \frac{a}{b}\in\mathbb{N} \Big\}.
	$$
We show that for every $f\in \mathcal{F}_L$,  there exists a Moran set such that $\uid E=f(\theta)$.
	\begin{proof}[Proof of Proposition \ref{Prop_MF}]
Given
$$
f(\theta)=\frac{La \theta +b}{Lc \theta +c} \in  \mathcal{F}_L,
$$
where $c>a\ge b>0$ such that $\frac{a}{b}\in\mathbb{N}$.

Let  $\alpha ,\beta,\gamma\in\mathbb{R}$ satisfy that $a=\log\alpha$,  $b=\log\beta$, $c=\log\gamma$. Since $\beta>1$, there exists a real $l>0$ such that $\beta^l>2$ is an integer.  Since
		$$
		\frac{\log\alpha^l}{\log\beta^l}= \frac{a}{b}\in\mathbb{N},
		$$
it is clear that  $\alpha^l\in\mathbb{N}$ and $\alpha^l>\beta^l$.
By setting $M=\alpha^l, N=\beta^l$ and $Q=\gamma^l$,  we have that  $M>N$ and
		$$
		f(\theta)=\frac{L \theta \log M  +\log N}{L\theta \log Q  +\log Q}.
		$$

Let $n_1=N$ and $c_k =\frac{1}{Q}$ for all $k>0$.   For every  $k\geq 2$,  we write that
		\begin{align*}
			n_k=
			\begin{cases}
				N, \quad L+L^2+\dots+L^{2n-2}<k\le L+L^2+\dots+L^{2n-1} ;\\
				M,\quad L+L^2+\dots+L^{2n-1}<k\le L+L^2+\dots+L^{2n}.
			\end{cases}
		\end{align*}

		Let $E$ be the  corresponding homogeneous Moran set  given by \eqref{attractor}.  By Corollary \ref{cor_HMS}, we have that
$$
\uid E=\limsup_{k\to\infty} \min_{k\leq m\leq l(k,\theta)} s_m,
$$
where
\begin{equation}\label{s_mdef}
 s_m= \frac{\log n_1 \dots n_m}{m\log Q}
\end{equation}
 and $l(k,\theta)$ is given by
$$
Q^{-l(k,\theta)}\leq Q^{-\frac{k}{\theta}}<Q^{-(l(k,\theta)-1)}.
$$

It suffices to prove that
$$
f(\theta)=\limsup_{k\to\infty} \min_{k\leq m\leq l(k,\theta)} s_m,
$$
for $\theta\in [\frac{1}{L^2},1]$.

For each integer $n>0$, let $\mathbf{u}_n, \mathbf{v}_n \in\Sigma^*$ with
\begin{equation} \label{def_uvn}
|\mathbf{u}_n|=\frac{L(1-L^{2n-1})}{1-L}, \qquad |\mathbf{v}_n|=\frac{L(1-L^{2n})}{1-L}.
\end{equation}
Since $L>2$, it is clear that
$$
|\mathbf{u}_1|<|\mathbf{v}_1|<|\mathbf{u}_2|<\ldots<|\mathbf{v}_{n-1}|<|\mathbf{u}_n|<|\mathbf{v}_n|<|\mathbf{u}_{n+1}|<\ldots.
$$
Since  $M\ge N$,  we have that  $s_k$ is monotonically increasing if  $|\mathbf{u}_n|<k\le |\mathbf{v}_n|$ and monotonically decreasing if $|\mathbf{v}_{n-1}|<k\le |\mathbf{u}_n|$.

Given $\theta\in (\frac{1}{L^2}, 1)$, we claim that, for every sufficiently large integer $n>0$, there exists  $\mathbf{z}_n\in\Sigma^*$ satisfying $|\mathbf{u}_n|\le|\mathbf{z}_n|<|\mathbf{u}_{n+1}|$ and
\begin{eqnarray*}
\max_{|\mathbf{u}_n|\le k\le |\mathbf{u}_{n+1}|}\min_{k\leq m\leq l(k,\theta)} s_m &=&\max_{|\mathbf{z}_{n}|-1\le k\le |\mathbf{z}_{n}|+1}\min_{k\leq m\leq l(k,\theta)} s_m.
\end{eqnarray*}

		To prove the claim,   for each $n>0$, we define $f_n:[0,+\infty)\rightarrow \mathbb{R}$ and $g_n:[0,+\infty)\rightarrow \mathbb{R}$
by
		$$
		f_n(x)=\frac{(L^2+L^4+\dots+L^{2n-2}+x)\log M +(L+L^3+\dots+L^{2n-1})\log N}{(L+L^2+\dots+L^{2n-1}+x)\log Q}
		$$
		and
		$$
		g_n(x)=\frac{(L^2+L^4+\dots+L^{2n})\log M +(L+L^3+\dots+L^{2n-1}+x)\log N}{(L+L^2+\dots+L^{2n}+x)\log Q}.
		$$
Since $M\ge N$, $f_n$ is increasing and $g_n$ is decreasing.	

	By solving the following equations, 		
\begin{align*}
			\begin{cases}
				f_n(x)=g_n(y) \\
				\frac{1}{\theta}(\frac{L(1-L^{2n-1})}{1-L}+x)= \frac{L(1-L^{2n})}{1-L}+y,
			\end{cases}
		\end{align*}
we obtain that
\begin{equation}\label{def_xyn}
\left\{\begin{array}{l}
				x_n=\frac{L^{2n+2}-\frac{1}{\theta}L^{2n}+(\frac{1}{\theta}-1)L^2}{\frac{1}{\theta}(L^2-1)}; \\
				y_n=\frac{L^{2n+1}-L}{L^2-1} (\frac{1}{\theta}-1).
\end{array}\right.
		\end{equation}
Note that
\begin{equation}\label{fn=gn}
f_n(x_n)=g_n(y_n).
\end{equation}

For  $\theta\in (\frac{1}{L^2}, 1)$,  there exists an integer $N_0>0$ such that
$$
\theta>\frac{L^{2n}-1}{L^{2n+2}-L^3+L-1},
$$
and
$$
1< x_n\le L^{2n}, \qquad 0\le y_n< L^{2n+1}-L^2,
$$
for all integers $n>N_0$.

 For each integer $k>0$ and $\mathbf{u}\in\Sigma^k$, we have
$$
|J_\mathbf{u}|=\frac{1}{Q}|J_{\mathbf{u}^*}|,
$$
and  it follows that
$$
|J_\mathbf{u}|^\frac{1}{\theta}=\Big(\frac{1}{Q}|J_{\mathbf{u}^*}|\Big)^\frac{1}{\theta}>\Big({\frac{1}{Q}}\Big)^{L^2}|J_{\mathbf{u}^*}|^\frac{1}{\theta},
$$
which is equivalent to
\begin{equation}\label{lk-lk-1}
l(k,\theta)-l(k-1, \theta)\le L^2.
\end{equation}

By \eqref{def_uvn} and \eqref{def_xyn}, it follows that
		\begin{eqnarray*}
			\frac{1}{\theta}([x_n]-1+|\mathbf{u}_n|)&=&\frac{1}{\theta}(|\mathbf{u}_n|+x_n+[x_n]-x_n-1)  \\
			&=&y_n+|\mathbf{v}_n|+\frac{1}{\theta}([x_n]-x_n-1),
		\end{eqnarray*}
and
		\begin{eqnarray*}
[\frac{1}{\theta}([x_n]-1+|\mathbf{u}_n|)]+1&=&|\mathbf{v}_n|+[y_n+\frac{1}{\theta}([x_n]-x_n-1)]+1  \\
			&\le& |\mathbf{v}_n|+[y_n].
		\end{eqnarray*}
Since $f_n$ and $g_n$ are monotone functions, by \eqref{s_mdef} and \eqref{fn=gn},  we have
\begin{eqnarray*}
		s_{[x_n]-1+|\mathbf{u}_n|}&=&f_n([x_n]-1)     \\
        &\le&f_n(x_n) \\
		&=&g_n(y_n) \\
        &\le&g_n([y_n]) \\
		&=&s_{|\mathbf{v}_n|+[y_n]} \\
		&\le& s_{[\frac{1}{\theta}([x_n]-1+|\mathbf{u}_n|)]+1}.
	\end{eqnarray*}
Since $1< x_n\le L^{2n}$ and $0\le y_n< L^{2n+1}-L^2$, it follows that
$$
\min_{|\mathbf{u}_n|+[x_n]-1\leq m\leq l(|\mathbf{u}_n|+[x_n]-1,\theta)} s_m =s_{[x_n]-1+|\mathbf{u}_n|}.
$$
For each $\mathbf{w}_n\in\Sigma^*$ satisfies $|\mathbf{u}_n|\le|\mathbf{w}_n|\le|\mathbf{u}_n|+[x_n]-1$, it follows that
\begin{eqnarray*}
\min_{|\mathbf{w}_n|\leq m\leq l(|\mathbf{w}_n|,\theta)} s_m&\le& s_{|\mathbf{w}_n|}  \\
&\le& s_{|\mathbf{u}_n|+[x_n]-1}  \\
&=&\min_{|\mathbf{u}_n|+[x_n]-1\leq m\leq l(|\mathbf{u}_n|+[x_n]-1|,\theta)} s_m.
\end{eqnarray*}

Similarly, we have that
		\begin{eqnarray*}
			\frac{1}{\theta}([x_n]+1+|\mathbf{u}_n|)&=&y_n+|\mathbf{v}_n|+\frac{1}{\theta}([x_n]-x_n+1),         \\
 \Big[\frac{1}{\theta}([x_n]+1+|\mathbf{u}_n|)\Big]+1 &\ge&|\mathbf{v}_n|+[y_n]+1,
		\end{eqnarray*}
and this implies that
		\begin{eqnarray*}
		s_{[x_n]+1+|\mathbf{u}_n|}&=&f_n([x_n]+1)    \\
        &\ge&f_n(x_n) \\
		&=&g_n(y_n) \\
		&\ge&g_n([y_n]+1)   \\
        &=&s_{|\mathbf{v}_n|+[y_n]+1} \\
		&\ge& s_{[\frac{1}{\theta}([x_n]+1+|\mathbf{u}_n|)]+1}.
	\end{eqnarray*}
Since $1< x_n\le L^{2n}$ and $0\le y_n< L^{2n+1}-L^2$, it follows that
$$
\min_{|\mathbf{u}_n|+[x_n]+1\leq m\leq l(|\mathbf{u}_n|+[x_n]+1,\theta)} s_m =s_{l(|\mathbf{u}_n|+[x_n]+1,\theta)}.
$$
For each $\mathbf{w}_n\in\Sigma^*$ satisfying $|\mathbf{u}_n|+[x_n]+1\le|\mathbf{w}_n|<|\mathbf{u}_{n+1}|$, by \eqref{lk-lk-1},  we have that 
\begin{eqnarray*}
\min_{|\mathbf{w}_n|\leq m\leq l(|\mathbf{w}_n|,\theta)} s_m&\le& \min\{s_{l(|\mathbf{w}_n|,\theta)}, s_{|\mathbf{u}_{n+1}|}\}  \\
&\le& s_{l(|\mathbf{u}_n|+[x_n]+1,\theta)}  \\
&=&\min_{|\mathbf{u}_n|+[x_n]+1\leq m\leq l(|\mathbf{u}_n|+[x_n]+1|,\theta)} s_m.
\end{eqnarray*}		

Hence, for every $n>N_0$, For $\mathbf{z}_n$ satisfying $|\mathbf{z}_n|=|\mathbf{u}_n|+[x_n]$, we have that
$$
\max_{|\mathbf{u}_n|\le k\le |\mathbf{u}_{n+1}|}\min_{k\leq m\leq l(k,\theta)} s_m =\max_{|\mathbf{z}_{n}|-1\le k\le |\mathbf{z}_{n}|+1}\min_{k\leq m\leq l(k,\theta)} s_m,
$$
and we complete the proof of the claim.

The claim implies that
\begin{equation}\label{sm=min}
		\limsup_{k\to\infty} \min_{k\leq m\leq l(k,\theta)} s_m=\limsup_{n\to\infty} \max_{|\mathbf{z}_{n}|-1\le k\le |\mathbf{z}_{n}|+1}\min_{k\leq m\leq l(k,\theta)} s_m.
\end{equation}
Therefore by \eqref{lk-lk-1}, \eqref{sm=min} and Lemma \ref{continue},  we have 		
\begin{eqnarray*}
			\limsup_{k\to\infty} \min_{k\leq m\leq l(k,\theta)} s_m&=&\limsup_{n\to\infty} \max_{|\mathbf{z}_{n}|-1\le k\le |\mathbf{z}_{n}|+1}\min_{k\leq m\leq l(k,\theta)} s_m  \\
&=&\lim_{n\to\infty} f_n(x_n)   \\
			&=&\lim_{n\to \infty}\frac{(\theta L+\frac{L^2-1}{L(L^{2n}-1)}x_n)\log M+\log N}{(\theta L+1+\frac{L^2-1}{L(L^{2n}-1)}x_n)\log Q}   \\
			&=&\frac{L\theta \log M  +\log N}{L \theta \log Q +\log Q}  \\
	        &=&f(\theta),
		\end{eqnarray*}
where $\theta\in(\frac{1}{L^2},1)$.

Since
		$$
		\ubd E=\frac{L\log M +\log N}{L\log Q  +\log Q},  \qquad  \hdd E=\lbd E=\frac{\log M +L\log N}{\log Q  +L\log Q},
		$$
		by the continuity of the intermediate dimension, we have
$$
\uid E=f(\theta),
$$
for $\theta\in[\frac{1}{L^2},1]$ and $\uid E=\hdd E$ for $\theta\in[0,\frac{1}{L^2}]$.  Moreover, by Corollary \ref{cor_HMS}, we have that
$$
\lid E=\hdd E=\frac{\log M +L\log N}{\log Q  +L\log Q},
$$ for $\theta\in[0,1]$.
	\end{proof}

	Next, we give some examples to explain our main conclusions and show some interesting facts.  The first example shows that the  upper intermediate and lower intermediate dimensions are different even for homogeneous  Moran sets.	
\begin{exmp}\label{exm1}
Given $J=[0,1]$, $c_k =\frac{1}{4}$ and		
$$
		n_k=\left\{ \begin{array}{lc}
			3, & \qquad (2n!)^2< k\le((2n+1)!)^2,\\
			2, &  		((2n+1)!)^2< k\le((2n+2)!)^2,
		\end{array}\right.
		$$
for every integer $k>0$. Let $E$ is the corresponding homogeneous Moran set given by \eqref{attractor}.  Then
\begin{eqnarray*}
		\uid E &=&\left\{ \begin{array}{ll}
                       \ubd E=\frac{\log3}{2\log2}, & \textit{  for } \theta\in (0, 1], \\
                       \hdd E, & \textit{  for } \theta=0;
                       \end{array}\right.    \\
		\lid E&=&\hdd E ={\frac{1}{2}},\qquad \quad \textit{ for } \theta\in [0, 1].
\end{eqnarray*}

	\end{exmp}
	\begin{proof}
		For each $k$, by the definition of $s_k$, there exist two integers $m(k)$ and $n(k)$ such that
		$$
		s_k=\frac{m(k)\log 2 +n(k)\log 3}{2m(k)\log 2 +2n(k)\log 2}.
		$$
		Since for all integers $m>0, n>0$, the following inequality holds
		$$
		{\frac{m\log 2 +n\log 3}{2m\log 2 +2n\log 2}}<{\frac{m\log 2 +(n+1)\log 3}{2m\log 2 +2(n+1)\log 2}} ,
		$$
		we obtain that
		$$
		\frac{1}{2}\le s_k\le\frac{\log 3}{2\log 2}.
		$$
		By \eqref{def_s***}, this implies that
		$$
		\frac{1}{2}\le \hdd E\le\ubd E\le\frac{\log 3}{2\log 2}.
		$$
Therefore, it is sufficiently to show that  $	\uid E\geq\frac{\log3}{2\log2}$ and $
		\lid E\leq {\frac{1}{2}}$ for $\theta\in(0,1]$.

		Let
		$$
		a_n={\frac{1}{4^{n((n-1)!)^2}}}, \qquad b_n={\frac{1}{4^{(n!)^2}}}.
		$$
		For every $n>2$, there exist $\mathbf{u}_n\in\Sigma^*$ and  $\mathbf{v}_n\in\Sigma^*$ such that
		$$
		a_n=\lvert J_{\mathbf{u}_n} \rvert,  \qquad b_n=\lvert J_{\mathbf{v}_n} \rvert,
		$$
and		it is clear that $s_k$ is monotonically increasing if  $|\mathbf{v}_{2n}|<k\le |\mathbf{v}_{2n+1}|$ and monotonically decreasing if $|\mathbf{v}_{2n+1}|<k\le |\mathbf{v}_{2n+2}|$.
		
Fix $\theta\in(0,1]$,  there exists $N$ such that  $\frac{1}{N} \le \theta <\frac{1}{N-1}$, it is follows that
		$$
		b_{N+i} \le a_{N+i}^N \le a_{N+i}^{{\frac{1}{\theta}}} < a_{N+i}< b_{N+i-1}.
		$$
Since $\lvert \mathbf{u}_n \rvert =n((n-1)!)^2,$,  there exist two integers $c_1(k)$ and $c_2(k)$ such that
		$$
		s_{\lvert \mathbf{u}_{2k+1} \rvert}={\frac{c_1(k) \log3 +c_2(k) \log2}{2c_1(k) \log2 +2c_2(k) \log2}}  ,
		$$
where $c_1(k)>(2k)((2k)!)^2$ and  $c_2(k)<((2k)!)^2$.  Letting $\delta=a_{2k+1}$, we have that  $s_{\delta, \theta}=s_{(2k+1)((2k)!)^2}$, and this implies that
		\begin{eqnarray*}
\uid E 	&\ge&\lim_{k\to \infty} s_{(2k+1)((2k)!)^2}\\
			&\ge&\limsup_{k\to \infty} {\frac{c_1(k) \log3 +c_2(k) \log2}{2c_1(k) \log2 +2c_2(k) \log2}} \\
			&\geq&{\frac{\log 3}{2\log 2}}.
		\end{eqnarray*}

Similarly, there exist two integers $c_3(k)$ and $c_4(k)$ such that
		$$
		s_{\lvert \mathbf{u}_{2k+2} \rvert}={\frac{c_3(k) \log2+c_4(k) \log3}{2 c_3(k) \log2 +2 c_4(k) \log2}},
		$$
where $c_3(k)>(2k+1)((2k+1)!)^2$ and $c_4(k)<((2k+1)!)^2$,
and   letting $\delta=a_{2k+2}$,  we have that  $s_{\delta, \theta}=s_{(2k+2)((2k+1)!)^2}$,  and similarly, it implies that
$$
 \lid E  \le {\frac{1}{2}}.
$$
	\end{proof}

	In the next example, we construct two homogeneous Moran sets,  and the upper intermediate dimension of their product is strictly less than
	the sum of upper intermediate dimensions.
	\begin{exmp}\label{exm2}
		Let $E$ be the homogeneous Moran set in Example \ref{exm1}. For each integer $k>0$, let $c_k =\frac{1}{4}$ and
		$$
		l_k=\left\{ \begin{array}{lc}
			2, & \qquad (2n!)^2< k\le((2n+1)!)^2,\\
			3, &  		((2n+1)!)^2< k\le((2n+2)!)^2.
		\end{array}\right.
		$$
Let $F$ be the  corresponding Moran set  given by \eqref{attractor} with respect to $\{c_k\}$ and $\{l_k\}$. Then
		$$
		\uid (E\times F)<\uid E +\uid F,
		$$
		with $\theta \in (0,1]$.
	\end{exmp}
	\begin{proof}
Since $\uid E=\uid F ={\frac{\log3}{2\log2}}$ for $\theta\in(0,1]$, it is sufficient to prove that 
		$$
		\ubd (E\times F) < {\frac{\log3}{\log2}}.
		$$
	By considering  the cover of $E\times F$ with squares with length of ${\frac{1}{4^k}}$, we have
		$$
		N_{{\frac{1}{4^k}}} (E\times F)\le 2^k *3^k =6^k,
		$$
and this implies that 		
		$$
		\ubd (E\times F) \le \limsup_{k\to \infty} {\frac{\log{6^k}}{\log{4^k}}} <{\frac{\log3}{\log2}}=\uid E +\uid F,
		$$
		when $\theta \in (0,1]$.
	\end{proof}
	
	In the next example, we construct a Moran set with  $c_*=0$,  and all the dimensions are identical.
	\begin{exmp}\label{exm3}
Let $E$ be a homogeneous Moran set with $n_k =2^k$ and $c_k={\frac{1}{3^{k+1}}}$. Then the intermediate dimension of $E$ exists, and
		$$
		\dim_{\theta} E=\hdd E=\bod E=\frac{\log 2}{\log 3} .
		$$
	\end{exmp}
	\begin{proof}
		Since $\underline{c}_k=c_k$ and $M_k=c_1c_2\ldots c_k$, we have that
		$$
	\lim_{k\to +\infty} \frac{\log{\underline{c}_k}}{\log{M_k}}=	\lim_{k\to \infty}\frac{\log c_k}{\log c_1 \dots c_k}=0.
		$$
By  \eqref{def_hmssk}, it is clear that
	$$
		s_k=\frac{(k+1)\log 2}{(k+3)\log 3} \le \frac{(k+2)\log 2}{(k+4)\log 3}=s_{k+1}.
		$$
This  implies that $s^*=s_*=\frac{\log 2}{\log 3}$,  and the 	 intermediate dimension of $E$ exists and
$$
\dim_{\theta} E=\hdd E=\bod E=\frac{\log 2}{\log 3}.
		$$
		
	\end{proof}

Finally, we give an example for the visualization of intermediate dimension by applying the method used in the proof of Proposition \ref{Prop_MF}.
\begin{exmp}
Given $J=[0,1]$ and a real $r\in(0,\frac{1}{2})$. Let $N>1,M>1,L>1$ be integers satisfying that $N\le M < \frac{1}{r}$. For each integer $k>0$, let $c_k =r$, and
		\begin{align*}
			n_k=
			\begin{cases}
				N,\quad \textit { if }k=1 \textit { or  } L+L^2+\dots+L^{2n-2}<k\le L+L^2+\dots+L^{2n-1},    \\
				M,\quad \textit { if } L+L^2+\dots+L^{2n-1}<k\le L+L^2+\dots+L^{2n}.
			\end{cases}
		\end{align*}
Let  $E$ be the corresponding Homogenous Moran set.	Then we have
$$
\uid E=\frac{L\log M+\frac{1}{\theta}\log N}{-(L+\frac{1}{\theta})\log r}
$$
for $\theta \in (\frac{1}{L^2}, 1]$, and $\uid E=\hdd E=\frac{L\log N+\log M}{-(L+1)\log r}$ for $\theta \in [0,\frac{1}{L^2}].$

	\end{exmp}

\end{document}